\DeclareMathOperator*{\toup}{\longrightarrow} 
\newcommand{\K}{\mathds{K}}
\newcommand{\N}{\mathds{N}}
\newcommand{\M}{\mathds{M}}
\newcommand{\Z}{\mathds{Z}}
\newcommand{\R}{\mathds{R}}
\newcommand{\T}{\mathds{T}}
\newcommand{\crit}{\mathrm{crit}}
\newcommand{\kk}{\bm{k}}
\newcommand{\ev}{\mathrm{ev}}
\newcommand{\dist}{\mathrm{dist}}
\newcommand{\injrad}{\mathrm{injrad}}
\newcommand{\id}{\mathrm{id}}
\newcommand{\ind}{\mathrm{ind}}
\newcommand{\avind}{\overline{\ind}}
\newcommand{\diff}{\mathrm{d}}
\newcommand{\fix}{\mathrm{fix}}
\newcommand{\Hom}{\mathrm{H}}
\newcommand{\Tan}{\mathrm{T}}
\newcommand{\orb}{\mathrm{orb}}
\newcommand{\W}{W^{1,2}}
\newcommand{\Wloc}{\W_{\mathrm{loc}}}
\begin{document}

\title[Isometry-invariant geodesics and the fundamental group]{Isometry-invariant geodesics\\ and the fundamental group}

\author{Marco Mazzucchelli}
\address{UMPA, \'Ecole Normale Sup\'erieure de Lyon, CNRS, 69364 Lyon, France}%
\email{marco.mazzucchelli@ens-lyon.fr}%
\thanks{This research is partially supported by the  ANR projects WKBHJ (ANR-12-BS01-0020) and  COSPIN (ANR-13-JS01-0008-01).}

\subjclass[2000]{58E10, 53C22}
\keywords{Isometry-invariant geodesics, closed geodesics, Morse theory}

\date{October 23, 2013. \emph{Revised: }July 4, 2014}

\begin{abstract}
We prove that on closed Riemannian manifolds with infinite abelian, but not cyclic, fundamental group, any isometry that is homotopic to the identity possesses infinitely many invariant geodesics. We conjecture that the result remains true if the fundamental group is infinite cyclic. We also formulate a generalization of the isometry-invariant geodesics problem, and a generalization of the celebrated Weinstein conjecture: on a closed contact manifold with a selected contact form, any strict contactomorphism that is contact-isotopic to the identity possesses an invariant Reeb orbit.
\end{abstract}

\maketitle

\section{Introduction}

\subsection{Background and main result}
Since the work of Hadamard \cite{Hadamard:Les_surfaces_a_courbures_opposees_et_leurs_lignes_geodesiques} and Poin\-car\'e \cite{Poincare:Sur_les_lignes_geodesiques_des_surfaces_convexes}, the problem of the existence of closed geodesics has occupied a central place in Riemannian geometry. In the presence of symmetry, described by an isometry $I$ of the Riemannian manifold $(M,g)$, the analogous problem consists in searching for geodesics on which the isometry acts as a non-trivial translation: namely, geodesics $\gamma:\R\looparrowright M$ that, after being suitably reparametrized with constant positive speed, satisfy $I(\gamma(t))=\gamma(t+1)$ for all $t\in\R$. These geodesics are simply called \textbf{isometry-invariant} or, more specifically, \textbf{$I$-invariant}. Their study was initiated by Grove \cite{Grove:Condition_C_for_the_energy_integral_on_certain_path_spaces_and_applications_to_the_theory_of_geodesics, Grove:Isometry_invariant_geodesics} in the 1970s, and since then several existence and multiplicity results have been established (see \cite{Mazzucchelli:On_the_multiplicity_of_isometry_invariant_geodesics_on_product_manifolds} for some history of the problem).

In a beautiful short paper of 1984 \cite{Bangert_Hingston:Closed_geodesics_on_manifolds_with_infinite_Abelian_fundamental_group}, Bangert and Hingston proved that on any closed Riemannian manifold with infinite abelian fundamental group, the growth rate of closed geodesics is at least the one of prime numbers (in particular, there are infinitely many of them). This lower bound is obtained by a clever use of a simple non-divisibility argument: on the space of closed curves whose homotopy class is a prime multiple of a generator of infinite order of the fundamental group, a closed geodesic obtained by means of a suitable minimax scheme is not iterated. In the current paper, we investigate the generalization of Bangert and Hingston's  result to the case of geodesics invariant by an isometry homotopic to the identity. The situation, here, appears way more complex: the non-divisibility argument mentioned above does not seem to hold in the isometry-invariant setting. However,  a subtle study of the local properties of isometry-invariant geodesics still allows to recover a multiplicity result in case the fundamental group is not cyclic.

\begin{thm}\label{t:main}
Let $(M,g)$ be a closed connected Riemannian manifold whose fundamental group is isomorphic to $\Z\oplus H$, for some non-trivial abelian group $H$. Every isometry of $(M,g)$ homotopic to the identity admits infinitely many invariant geodesics.
\end{thm}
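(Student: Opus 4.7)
The plan is to transpose Bangert--Hingston's minimax argument \cite{Bangert_Hingston:Closed_geodesics_on_manifolds_with_infinite_Abelian_fundamental_group} to Grove's variational setup for $I$-invariant geodesics. Work on the Hilbert manifold $\Lambda_I=\{\gamma\in\W([0,1],M) : \gamma(1)=I(\gamma(0))\}$ equipped with the energy $E(\gamma)=\tfrac12\int_0^1|\dot\gamma|^2\,\diff t$, whose critical points modulo the $\R$-action by time translations are exactly the $I$-invariant geodesics, and which satisfies the Palais--Smale condition. Because $I$ is homotopic to $\id$ and $\pi_1(M)$ is abelian, the monodromy of the evaluation fibration $\ev_0\colon\Lambda_I\to M$ is trivial, so the connected components of $\Lambda_I$ are canonically indexed by $\pi_1(M)=\Z\oplus H$; denote them $\Lambda_I^c$.

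Fix a generator $\alpha$ of the $\Z$-summand and a non-trivial $h\in H$ (this is precisely the place where the hypothesis $H\neq 0$ is used). For each prime $p$ I would work on the component $\Lambda_I^{p\alpha+h}$. Starting from length-minimizing $I$-invariant geodesics $\sigma\in\Lambda_I^\alpha$ and $\eta\in\Lambda_I^h$, the classical Bangert--Hingston construction --- concatenate $p$ copies of $\sigma$ with a single copy of $\eta$ and let the joining parameters slide along a simplex --- produces a family in $\Lambda_I^{p\alpha+h}$ that detects a non-trivial cohomology class. Ljusternik--Schnirelman theory then delivers a critical value $c_p$ with the sharp asymptotic $c_p\leq\tfrac{p^2}{2}L(\sigma)^2+O(p)$ and, correspondingly, an $I$-invariant geodesic $\gamma_p$.

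The decisive remaining step is to show that the $\gamma_p$ represent infinitely many \emph{geometrically distinct} $I$-invariant geodesics. The feature absent from \cite{Bangert_Hingston:Closed_geodesics_on_manifolds_with_infinite_Abelian_fundamental_group} is that a single geometric $I$-invariant geodesic $c$ whose image is a closed curve of arclength $L_c$ admits an entire $\Z$-family of admissible shift-$1$ reparametrizations, obtained by varying the speed through the arithmetic progression $v_c+nL_c$; these reparametrizations lie in distinct components of $\Lambda_I$, forming a coset of the cyclic subgroup $\langle[C]\rangle\leq\pi_1(M)$ where $[C]$ is the free-homotopy class of the underlying closed image. Assuming for contradiction that only finitely many geometric $I$-invariant geodesics $c_1,\ldots,c_N$ exist, the components visited by their reparametrizations form a finite union of such cosets; one would then compare, for each $c_i$ with $[C_i]=m_i\alpha+h_i$ and for the primes $p$ for which some reparametrization of $c_i$ reaches $\Lambda_I^{p\alpha+h}$, the asymptotic energy $\sim\tfrac{p^2}{2}L_{c_i}^2/(k_i m_i)^2$ (where $k_i$ is the order of $h_i$ in $H$) with the minimax level $c_p\sim\tfrac{p^2}{2}L(\sigma)^2$ in order to derive a contradiction for all sufficiently large $p$.

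The main obstacle is exactly this energy-versus-multiplicity comparison when the torsion order $k_i$ conspires with the image-length $L_{c_i}$ to bring the reparametrization energy below the naive upper bound on $c_p$: a stable-norm or systolic estimate alone does not discriminate the ambiguous cases. Overcoming it requires a detailed local Morse-theoretic analysis of $I$-invariant geodesics --- the indices and nullities of their reparametrization families, the precise distribution of these families among the components of $\Lambda_I$, and a refinement of the minimax estimate exploiting the mandatory detour through $\Lambda_I^h$ --- and this is the ``subtle study of the local properties of isometry-invariant geodesics'' flagged in the introduction, where the bulk of the technical work will sit.
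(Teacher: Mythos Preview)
Your proposal does not close, and the gap you yourself flag is real: the energy-versus-multiplicity comparison simply cannot be made to work along the lines you sketch. In the closed-geodesics problem the prime trick succeeds because a minimax geodesic in the component indexed by a prime multiple of a generator is automatically non-iterated. In the $I$-invariant setting this fails, as the paper warns in the introduction: a single periodic $I$-invariant geodesic $\gamma$ of basic period $p$ produces critical points $\gamma^{mp+1}$ in \emph{all} the components $\Lambda_I^{c_0+m[\gamma|_{[0,p]}]}$, regardless of primality, and the energy of $\gamma^{mp+1}$ grows like $(mp+1)^2E(\gamma)$, which is exactly the same quadratic rate as your upper bound for $c_p$. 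No refinement of the $O(p)$ term, and no systolic or stable-norm inequality, separates the two scales; your ``ambiguous cases'' are in fact the generic ones. Invoking unspecified local Morse theory at this point is not a proof step but a restatement of the difficulty.

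The paper's argument is organized quite differently and never uses primes or energy asymptotics. It works in the components $\Omega_k$ indexed by $kz$ (with $z$ the $\Z$-generator), takes minimizers $\alpha_k$, and --- assuming these come from a single geometric geodesic $\alpha$ --- uses the nontrivial $h\in H$ to manufacture a \emph{one-dimensional} minimax: the evaluation map gives a surjection $\pi_1(\Lambda(M;I),\alpha_k)\twoheadrightarrow\pi_1(M)$, and one picks $h_k\in\pi_1(\Lambda(M;I),\alpha_k)$ mapping to $h$. The minimax value $c_k$ over loops representing $h_k$ is strictly above $E(\alpha_k)$ because $[\alpha|_{[0,q]}]\notin H$. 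The decisive input is then a qualitative lemma (Lemma~\ref{l:Bangert_for_I_invariant_geodesics}): for all sufficiently large $m$, any path $\Theta:[-1,1]\to\Lambda(M;I)$ with $\Theta(0)=\gamma^{mp+1}$ and $E(\Theta(s))<E(\gamma^{mp+1})$ for $s\neq0$ can be homotoped rel endpoints below the level. Its proof combines Grove--Tanaka's structural theorem (finitely many model spaces $\Omega_i$ capturing the local behaviour of $E^{mp+1}$ near $\gamma$ for all $m$) with Bangert's path-shortening homotopy, and it is precisely here that the ``subtle local study'' lives. Given this lemma, one shows (Lemma~\ref{l:nice_loop}) that each $h_k$ admits a representative touching the level $c_k$ only at finitely many critical points; if all such points were iterates $\gamma^{\mu(k)p+1}$ of a single $\gamma$, then $\mu(k)\to\infty$ and Lemma~\ref{l:Bangert_for_I_invariant_geodesics} pushes the representative strictly below $c_k$, a contradiction. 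Thus the role of $H\neq0$ is not to tag components by $p\alpha+h$, but to supply the $\pi_1$-class in the path space that makes the minimax one-dimensional; and the finiteness contradiction comes not from comparing energies but from the fact that high iterates are never one-dimensional mountain passes.
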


It seems reasonable to expect that the result still hold in case the fundamental group is isomorphic to $\Z$. The seemingly technical difficulty we come across when we relax the non-cyclicity assumption is that the available minimax scheme that produces isometry-invariant geodesics is not one-dimensional anymore: as in the closed geodesics case, we would need to perform minimax on a suitable family of non-trivial $n$-spheres, for some $n\geq 2$, inside the space of isometry-invariant paths. We refer the reader to Remark~\ref{r:infinite_cyclic_fundamental_group} for a more precise discussion of this issue.

\subsection{Invariant Reeb orbits}
The geodesic flow of $(M,g)$ can be seen as the Reeb flow on the unit cotangent bundle $\mathrm{S}^*M$ equipped with the contact form given by the restriction of the Liouville form. An isometry of $(M,g)$ lifts to a contactomorphism of $\mathrm{S}^*M$ that preserves the contact form. In this way, the problem of isometry-invariant geodesics can be seen as a special instance of the problem of invariant Reeb orbits, which goes as follows. Let $(Y,\xi)$ be a closed contact manifold, equipped with a contact form $\alpha$ such that $\ker\alpha=\xi$ and a contactomorphism $\phi$ that preserves $\alpha$.  We denote by $R$ the corresponding Reeb vector field, which is defined by $\alpha(R)\equiv1$ and $\diff\alpha(R,\cdot)\equiv0$. We call \textbf{$\phi$-invariant Reeb orbit} a curve $\gamma:\R\to Y$ such that $\dot\gamma= R\circ\gamma$ and, for some time-shift $T\neq0$,  satisfies $\phi(\gamma(t))=\gamma(t+T)$.  We wish to stress the importance of the requirement that the time-shift be non-zero: with this definition, $\id$-invariant Reeb orbits are precisely closed Reeb orbits. Therefore, the existence problem for invariant Reeb orbits contains the corresponding problem for closed Reeb orbits. It seems natural to formulate the following generalization of the celebrated Weinstein conjecture (see e.g.~\cite[page~120]{Hofer_Zehnder:Symplectic_invariants_and_Hamiltonian_dynamics}).

\begin{conj}\label{conjecture}
In a closed contact manifold with a selected contact form, every contactomorphism $\phi$ that preserves the form and is contact-isotopic to the identity admits a $\phi$-invariant Reeb orbit. 
\hfill\qed
\end{conj}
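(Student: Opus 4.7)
The plan is to reduce Conjecture~\ref{conjecture} to a Weinstein-type existence problem on a mapping torus, and then to attack it via Floer-theoretic techniques adapted from the classical setting. Fix a contact isotopy $\{\phi_t\}_{t\in[0,1]}$ with $\phi_0=\id$ and $\phi_1=\phi$, and form the mapping torus
\[
  Y_\phi \,:=\, Y\times\R\,\big/\,(x,s+1)\sim(\phi(x),s),
\]
a closed manifold fibering over $S^1$. The condition $\phi^*\alpha=\alpha$ implies $\phi_*R=R$, so the vector field $\widetilde R:=R+\partial_s$ descends to $Y_\phi$, and the closed integral curves of $\widetilde R$ with winding number $k\in\Z\setminus\{0\}$ around $S^1$ correspond precisely to the $\phi^k$-invariant Reeb orbits of $(Y,\alpha)$. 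In this formulation, Conjecture~\ref{conjecture} becomes the statement that, under the contact-isotopy hypothesis, $\widetilde R$ possesses a closed integral curve of non-zero winding.

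Next, I would endow $Y_\phi$ with the auxiliary geometric data inherited from the contact form: both $\alpha$ and $d\alpha$ descend under the gluing, and together with the fibration $Y_\phi\to S^1$ they furnish a stable-Hamiltonian-type structure whose distinguished ``Reeb'' direction is precisely $\widetilde R$. The contact-isotopy hypothesis should then be leveraged to construct a topologically controlled symplectic cobordism by suitably twisting the symplectization $(Y\times\R_{>0},\,d(r\alpha))$ along the isotopy $\phi_t$. This cobordism is intended to play, in our setting, the role that symplectic fillings play in Hofer's proof of the Weinstein conjecture for overtwisted contact $3$-manifolds and in Rabinowitz--Floer theory: it provides the ambient symplectic geometry in which holomorphic curves with prescribed asymptotics can be studied.

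The mechanism for producing invariant Reeb orbits would then be an equivariant Floer-type homology $FH_*(Y,\alpha,\phi)$ whose chain complex is generated by $\phi$-invariant Reeb orbits and whose differential counts finite-energy $J$-holomorphic cylinders in the twisted symplectization; non-vanishing of this homology would follow from a transfer map to the Rabinowitz--Floer homology of the symplectic cobordism, itself known to be non-trivial for topological reasons. The principal obstacle is intrinsic rather than technical: Conjecture~\ref{conjecture} contains the classical Weinstein conjecture as the special case $\phi=\id$, and the latter is still open in dimension at least $5$. A more tractable intermediate goal, which I would attack first, is the three-dimensional case, where one could try to extend Taubes' Seiberg--Witten proof of the Weinstein conjecture by working equivariantly with respect to $\phi$. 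Another promising specialization is the cotangent-bundle case in which $(Y,\alpha)$ is the unit cosphere bundle of a Riemannian manifold, where the variational methods of Grove, further developed in the present paper, already yield existence of invariant geodesics under appropriate topological hypotheses on the base.
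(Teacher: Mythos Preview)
The statement you are addressing is not a theorem of the paper but a \emph{conjecture}: the paper merely formulates it and discusses special cases in which it is known to hold. There is no ``paper's own proof'' to compare your proposal against. You yourself recognize the central obstruction when you write that the conjecture contains the classical Weinstein conjecture as the case $\phi=\id$, which is open in dimension at least five; this alone means that no argument of the kind you sketch can currently succeed in full generality.

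What you have written is therefore not a proof but a research outline. Several steps are speculative rather than established: the existence and well-definedness of the ``equivariant Floer-type homology'' $FH_*(Y,\alpha,\phi)$ you describe, the transfer map to Rabinowitz--Floer homology and its non-triviality, and the extension of Taubes' Seiberg--Witten argument to the $\phi$-equivariant setting are all substantial open problems, not lemmas one can cite. The mapping-torus reformulation is a reasonable first move, and your observation that the unit cotangent bundle case is more tractable is consistent with the paper's own discussion (which notes that the conjecture is known there under hypotheses on the fundamental group, via the work of Albers--Merry and Kang). But as a proof of Conjecture~\ref{conjecture} the proposal has a genuine and acknowledged gap: it would, in particular, resolve the Weinstein conjecture.
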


The notion of invariant Reeb orbit is related to the one of translated point, introduced by Sandon in \cite{Sandon:On_iterated_translated_points_for_contactomorphisms_of_R2n+1_and_R2nxS1}. Given a contactomorphism $\psi$ of the above contact manifold $(Y,\xi)$ that does not necessarily preserve the contact form $\alpha$, a translated point of $\psi$ is a point $x\in Y$ that belongs to the same Reeb orbit of its image $\psi(x)$ and satisfies $(\psi^*\alpha)_x=\alpha_x$. In the same paper, Sandon conjectured that any such $\psi$ that is contact-isotopic to the identity possesses at least as many translated points as the minimal number of critical points of a smooth real-valued function on $Y$. When $\psi$ preserves the contact form $\alpha$, its $\psi$-invariant Reeb orbits are precisely the Reeb orbits of translated points that are closed or not contained in the set of fixed points of $\psi$. If we further assume that $\psi$ does not have fixed points (which is true for a generic perturbation of $\psi$ by the Reeb flow) our Conjecture~\ref{conjecture}  becomes a special case of Sandon's one. However, we wish to stress that the two conjectures have quite a different flavor: indeed, ours contains the Weinstein conjecture, whereas this is not the case for Sandon's one.

Conjecture~\ref{conjecture} is known to be true in the geodesics setting:  in a closed Riemannian manifold, every isometry that is homotopic to the identity possesses an invariant geodesic, as Grove proved in \cite[Theorem~3.7]{Grove:Condition_C_for_the_energy_integral_on_certain_path_spaces_and_applications_to_the_theory_of_geodesics}. If the isometry is not homotopic to the identity but is fixed point free, an invariant geodesic can be found by a simple minimization procedure \cite[Proposition~2.7]{Grove:Condition_C_for_the_energy_integral_on_certain_path_spaces_and_applications_to_the_theory_of_geodesics}. However, the topology of the Riemannian manifold may force the existence of fixed points (e.g.\ if the Euler characteristic is non-zero), and the hypothesis that the isometry be homotopic to the identity cannot be completely relaxed, as the example of the isometry $I(x,y)=(1-y,x)$ on the flat torus $\T^2=[0,1]^2/\{0,1\}^2$ shows. 

More generally, Conjecture~\ref{conjecture} holds when the contact manifold is a unit cotangent bundle (equipped with the Liouville contact form) of a closed manifold whose fundamental group is either finite or has infinitely many conjugacy classes\footnote{There is no known example of a finitely generated infinite group with finitely many conjugacy classes.}. This statement is not explicitly proved in the literature, but can be inferred by combining the arguments in \cite{Albers_Merry:Translated_points_and_Rabinowitz_Floer_homology} and \cite{Kang:Survival_of_infinitely_many_critical_points_for_the_Rabinowitz_action_functional}. The author is grateful to Peter Albers and Will Merry for pointing this out to him.

\subsection{Organization of the paper}
In Section~\ref{s:Preliminaries} we discuss the features of the variational principle for isometry-invariant geodesics: in Sections~\ref{s:The_energy_function}, \ref{s:Topology_of_Lambda_M_I}, and~\ref{s:local_properties} we give the background on the Morse theory of the energy function, while in Section~\ref{s:Bangert} we provide a crucial technical result (Lemma~\ref{l:Bangert_for_I_invariant_geodesics}) on minimax isometry-invariant geodesics. In Section~\ref{s:multiplicity_results} we prove multiplicity results: in Section~\ref{s:multiplicity_results_1} we establish Theorem~\ref{t:main} in the easier case in which the rank of the fundamental group is at least two, and finally in Section~\ref{s:multiplicity_results_2} we carry out the proof of  Theorem~\ref{t:main} in the general case.

\section{Preliminaries}
\label{s:Preliminaries}

\subsection{The energy function}
\label{s:The_energy_function}

Let us recall the variational setting for the study of isometry-invariant geodesics from \cite{Grove:Condition_C_for_the_energy_integral_on_certain_path_spaces_and_applications_to_the_theory_of_geodesics}. We consider a closed connected Riemannian manifold $(M,g)$ of dimension larger than one, equipped with an isometry $I$. We denote by $\Lambda(M;I)$ the space of $\Wloc$ curves $\gamma:\R\to M$ such that $I(\gamma(t))=\gamma(t+1)$ for all $t\in\R$. This space, equipped with a suitable infinite-dimensional Riemannian metric, is a complete Hilbert manifold. The real line $\R$ acts on $\Lambda(M;I)$ by time-shift: given an $I$-invariant curve $\gamma$, a real number $\tau$ acts on it by $(\tau\cdot\gamma)(t)=\gamma(t+\tau)$. This action is not free on the subspace of periodic curves, the isotropy groups being generated by the minimal period of the curve if it is non-zero, or the whole $\R$ if the curve is stationary. 

We will be interested in the energy function $E:\Lambda(M;I)\to[0,\infty)$, which is given by
\[
E(\gamma)=\int_0^1 g(\dot\gamma(t),\dot\gamma(t))\,\diff t.
\]
This is a smooth $\R$-invariant function that satisfies the Palais-Smale condition. Its critical points may be of two kinds: ``genuine'' critical points with positive critical value, which are $I$-invariant geodesics of $(M,g)$ parametrized with constant positive speed, and ``spurious'' critical points with critical value zero, given by those curves that are constantly equal to some fixed point of the map $I$. Genuine critical points $\gamma$ come in critical orbits $\orb(\gamma)=\R\cdot\gamma$, which are homeomorphic to either a circle if $\gamma$ is a periodic curve, or the real line if $\gamma$ is an open curve. 

The critical orbits of open $I$-invariant geodesics are non-isolated ones, as it was proved by Grove in \cite[Theorem~2.4]{Grove:Isometry_invariant_geodesics}. This important fact implies that, when studying the multiplicity of isometry-invariant geodesics, one can always assume that all such geodesics are periodic ones. Any periodic $I$-invariant geodesic gives rise to an infinite sequence of critical orbits of the energy: indeed, the geodesic can be parametrized with some $\gamma\in\Lambda(M;I)$ with minimal period $p\geq1$, so that $\gamma(t)=\gamma(t+p)$; for every positive integer $m$, the curve $\gamma^{mp+1}\in\Lambda(M;I)$ given by $\gamma^{mp+1}(t)=\gamma((mp+1)t)$ is also a critical point of $E$, but it is equivalent to $\gamma$ from the geometric point of view.

\subsection{Topology of the space of invariant paths}
\label{s:Topology_of_Lambda_M_I}

The easiest way to find critical points of a bounded from below function satisfying the Palais-Smale condition is by minimizing it on any connected component of its domain. For the energy function $E$, such a procedure may end up giving a critical point with zero critical value, which is simply a fixed point of the isometry $I$. However, this happens only in finitely many connected components of the space of $I$-invariant paths.

\begin{lem}\label{l:stationary_curves}
The critical points of the energy function $E:\Lambda(M;I)\to\R$ with zero critical value are contained in finitely many connected components of $\Lambda(M;I)$.
\end{lem}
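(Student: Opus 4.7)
The plan is to identify the zero-energy critical points with constant curves sitting at fixed points of $I$, and then bound the number of components of $\Lambda(M;I)$ they can occupy by the number of connected components of $\mathrm{fix}(I)$, which is finite by compactness.

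First I would observe that $E(\gamma)=0$ forces $\dot\gamma\equiv 0$, so $\gamma$ is constant at some point $x\in M$; the $I$-invariance condition $I(\gamma(t))=\gamma(t+1)$ then reduces to $I(x)=x$. Conversely every constant curve at a fixed point of $I$ clearly lies in $\Lambda(M;I)$ with zero energy. Hence the zero-energy critical set is in bijection with $\mathrm{fix}(I)\subset M$ via $x\mapsto \gamma_x$, where $\gamma_x(t)\equiv x$.

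Next I would invoke the classical fact that the fixed point set of a Riemannian isometry is a (possibly disconnected, possibly mixed-dimensional) closed totally geodesic submanifold of $M$: locally near a fixed point $x$, the exponential map conjugates $I$ to the linear isometry $\diff I_x$ of $\Tan_x M$, and $\mathrm{fix}(I)$ is identified with $\exp_x(\ker(\diff I_x-\id))$. Since $M$ is compact, $\mathrm{fix}(I)$ is compact; a compact topological manifold, even with components of different dimensions, has only finitely many connected components. Call them $F_1,\dots,F_N$.

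Finally I would show that each $F_j$ contributes at most one connected component of $\Lambda(M;I)$. If $x,y\in F_j$, pick a continuous path $\alpha:[0,1]\to F_j$ with $\alpha(0)=x$, $\alpha(1)=y$; then $s\mapsto \gamma_{\alpha(s)}$ is a continuous path in $\Lambda(M;I)$ from $\gamma_x$ to $\gamma_y$ (continuity is immediate because the map $M\hookrightarrow \Lambda(M;I)$ sending a point to the corresponding constant curve is continuous for the $\W$ topology). Therefore the zero-energy critical points lie in at most $N$ connected components of $\Lambda(M;I)$. The only mild obstacle is the structural statement about $\mathrm{fix}(I)$, but this is a standard consequence of the linearization of an isometry at a fixed point.
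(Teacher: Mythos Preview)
Your proof is correct and reaches the same conclusion, but the argument is genuinely different from the paper's.

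You appeal to the structural fact that $\fix(I)$ is a closed totally geodesic submanifold of $M$, so that it is compact and locally path-connected, hence has finitely many components $F_1,\dots,F_N$; you then push each $F_j$ into $\Lambda(M;I)$ via the inclusion of constant curves. (A small slip: the map ``$M\hookrightarrow\Lambda(M;I)$'' you mention is only defined on $\fix(I)$, not on all of $M$; but since your path $\alpha$ stays in $F_j\subset\fix(I)$ this is harmless.) The paper, by contrast, never invokes the submanifold structure of $\fix(I)$: it covers the compact set $\fix(I)$ by finitely many Riemannian balls $B(q_i,r_{q_i})$ in $M$ and defines a continuous map $\Gamma$ on the union of these balls, sending each $p$ to the short geodesic from $p$ to $I(p)$. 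Since each ball is connected, $\Gamma$ carries it into a single component of $\Lambda(M;I)$, and every constant curve at a fixed point lies in the image of $\Gamma$.

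Your route is shorter and conceptually clean, at the price of importing the linearization fact about fixed sets of isometries. The paper's route is slightly more hands-on but more elementary: it needs only compactness of $\fix(I)$ and works because the auxiliary map $\Gamma$ is defined on connected open sets of the ambient manifold rather than on $\fix(I)$ itself.
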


\begin{proof}
We only have to consider the case in which $I$ possesses some fixed points, otherwise the statement is trivially true. We denote by $B(q,r)$ the Riemannian ball in $(M,g)$ that is centered at $q$ and has radius $r$, and by $\injrad(M,g)$ the injectivity radius of our closed Riemannian manifold. For each $q\in\fix(I)$ there exists $r_q\in(0,\injrad(M,g))$ such that all points $p\in B(q,r_q)$ lie at distance less than $\injrad(M,g)$ from their image $I(p)$. For all such points $p$ we consider the unique geodesic $\gamma_p:\R\to M$ parametrized with constant speed such that $\gamma_p|_{[0,1]}$ joins $p$ with $I(p)$ and has length equal to their distance. Notice that, if $p\in\fix(I)$, its associated curve $\gamma_p$ is stationary at $p$. Since $\fix(I)$ is compact, we can find finitely many points $q_1,...,q_n\in\fix(I)$ such that the union of the balls $B(q_i,r_{q_i})$ covers $\fix(I)$. Consider the map $\Gamma:\bigcup_{i=1}^n B(q_i,r_{q_i})\to\Lambda(M;I)$ given by $\Gamma(p)=\gamma_p$. This map is clearly continuous, and thus sends each Riemannian ball $B(q_i,r_{q_i})$ into some connected component of $\Lambda(M;I)$. Since the stationary $I$-invariant curves (which are the fixed points of $I$) are contained in the image of $\Gamma$, they are contained in finitely many connected components of $\Lambda(M;I)$.
\end{proof}

As it was first remarked by Grove~\cite[Lemma~3.6]{Grove:Condition_C_for_the_energy_integral_on_certain_path_spaces_and_applications_to_the_theory_of_geodesics}, the homotopy type of the space of $I$-invariant curves depends only on the homotopy class of $I$ within the space of continuous map of $M$ into itself. In particular, if $I$ is homotopic to the identity, $\Lambda(M;I)$ is homotopy equivalent to the free loop space $\Lambda(M;\id)$. Given a continuous homotopy $I_t:M\to M$, with $I_0=\id$ and $I_1=I$, an explicit homotopy equivalence $\iota:\Lambda(M;\id)\to\Lambda(M;I)$ can be constructed as
\begin{align}
\label{e:homotopy_equivalence_of_path_spaces}
\iota(\gamma)(t)
=
\left\{
  \begin{array}{ll}
    \gamma(2t) & \mbox{ if }t\in\big[0,\tfrac12\big],\vspace{5pt} \\
    I_{2t-1}(\gamma(0)) & \mbox{ if }t\in\big[\tfrac12,1\big].
  \end{array}
\right.
\end{align}
A homotopy inverse is the map  $\nu:\Lambda(M,I)\to\Lambda(M;\id)$ given by
\begin{align}
\label{e:inverse_homotopy_equivalence_of_path_spaces}
\nu(\gamma)(t)
=
\left\{
  \begin{array}{ll}
    \gamma(2t) & \mbox{ if }t\in\big[0,\tfrac12\big],\vspace{5pt} \\
    I_{2-2t}(\gamma(0)) & \mbox{ if }t\in\big[\tfrac12,1\big].
  \end{array}
\right.
\end{align}

\begin{rem}\label{r:broken_geodesics}
Actually, in order for $\iota(\gamma)$ to lie in  $\Lambda(M;I)$, all the curves of the form $t\mapsto I_t(q)$ must have regularity $\Wloc$. One can achieve this by an arbitrarily small $C^0$-perturbation of the homotopy $I_t$, for instance as follows. Choose an integer $n$ that is large enough so that, for all $q\in M$ and $s,t\in[0,1]$ with $|s-t|\leq 1/n$, the Riemannian distance between $I_{s}(q)$ and $I_{t}(q)$ is less than the injectivity radius of $(M,g)$. Let $\gamma_q:[0,1]\to M$ be the (unique) continuous curve such that, for each $i\in\{0,...,n-1\}$, its restriction $\gamma_{q}|_{[i/n,(i+1)/n]}$ is the shortest geodesic parametrized with constant speed joining $I_{i/n}(q)$ and $I_{(i+1)/n}(q)$. We can now reset the homotopy to be $I_t(q):=\gamma_q(t)$. \hfill\qed
\end{rem}

From now on, let us assume that the fundamental group $\pi_1(M)$ is abelian. This implies that there is a bijective map $\pi_1(M)\to\pi_0(\Lambda(M;\id))$ that sends any homotopy class of loops to the corresponding connected component. By taking the composition with the map $\pi_0(\iota)$ induced by the homotopy equivalence~\eqref{e:homotopy_equivalence_of_path_spaces} we obtain a bijective map
\begin{align}\label{e:bijection_pi1_pi0}
\pi_1(M)\toup^{\cong}\pi_0(\Lambda(M;I)).
\end{align}
Consider the evaluation map that sends every loop $\gamma$ to its base point $\gamma(0)$. Since the fundamental group of $M$ is abelian, this map induces a surjective homomorphism of the fundamental group of any connected component of the free loop space $\Lambda(M;\id)$ onto the fundamental group of $M$. By precomposing with the isomorphism $\pi_1(\nu)$ induced by the homotopy equivalence~\eqref{e:inverse_homotopy_equivalence_of_path_spaces}, we also get a surjective homomorphism of the fundamental group of any connected component of $\Lambda(M;I)$ onto the one of $M$. More precisely, for all $q_0\in M$ and $\gamma_0\in\Lambda(M;I)$ with $\gamma_0(0)=q_0$, the evaluation map $\ev:\Lambda(M;I)\to M$ given by $\ev(\gamma)=\gamma(0)$ induces a surjective homomorphism
\begin{align}\label{e:epimorphism_pi1_pi1}
\pi_1(\ev):\pi_1(\Lambda(M;I),\gamma_0)\to\pi_1(M,q_0).
\end{align}

\subsection{Local properties of isometry-invariant geodesics}\label{s:local_properties}

Let $\gamma\in\Lambda(M;I)$ be an $I$-invariant geodesic that is periodic with minimal period $p\geq1$ (for the remaining of section~\ref{s:Preliminaries} we do not need to assume the isometry $I$ to be homotopic to the identity). The local Morse-theoretic properties of the sequence of critical orbits $\{\orb(\gamma^{mp+1})\ |\ m\in \N\}$ were thoroughly investigated by Grove and Tanaka \cite{Grove_Tanaka:On_the_number_of_invariant_closed_geodesics_ACTA, Tanaka:On_the_existence_of_infinitely_many_isometry_invariant_geodesics}. Here, we shall recall only the ones that will be needed for the proof of our multiplicity results in section~\ref{s:multiplicity_results} (for a quick reference, see~\cite[Sections~2.2 and~3.1]{Mazzucchelli:On_the_multiplicity_of_isometry_invariant_geodesics_on_product_manifolds}). 

We recall that the Morse index $\ind(\gamma)$ is the (finite) dimension of the negative eigenspace of the Hessian of $E$ at the critical point. The behavior of the index along our sequence of critical orbits is described by the following lemma.
\begin{lem}[Grove-Tanaka]
\label{l:iteration_formulae}
The sequence $m^{-1}\ind(\gamma^{mp+1})$ converges to a non-negative real number $\avind(\gamma)$ as $m\to\infty$. Moreover, if this limit is zero, then $\ind(\gamma^{mp+1})=0$ for all $m\in\N$. 
\hfill\qed
\end{lem}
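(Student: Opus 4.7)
The plan is to adapt Bott's iteration formulas for closed geodesics to the $I$-invariant setting, following Grove--Tanaka. Let $n=\dim M$. First I would linearize the geodesic equation along $\gamma$ to produce a symplectic monodromy that encodes Jacobi fields over one $I$-period: the boundary condition $\xi(t+1)=(dI)\xi(t)$, imposed by the $I$-invariance, gives a twisted return map in $\mathrm{Sp}(2n)$. After splitting off the degenerate direction tangent to $\dot\gamma$, its eigenvalues come in the standard symplectic pattern of points on $S^{1}$ together with reciprocal pairs off $S^1$.

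Next I would introduce a Bott-type index function $\Lambda\colon S^{1}\to\Z_{\geq 0}$: for each $z\in S^{1}$, define $\Lambda(z)$ as the Morse index of the Hessian of $E$ restricted to the space of $\Wloc$ variations $\xi$ along $\gamma$ satisfying the twisted Floquet condition $\xi(t+p)=z\,\xi(t)$. A Fourier decomposition of the index form shows that the Hessian of $E$ at $\gamma^{mp+1}$ splits as an orthogonal direct sum of the $\Lambda(z)$-summands, where $z$ ranges over the $(mp+1)$-th roots of a fixed $z_{0}\in S^{1}$ that encodes the action of $dI$ on the normal bundle of $\gamma$. This yields the Bott-type formula
\[
\ind(\gamma^{mp+1})=\sum_{z^{mp+1}=z_{0}}\Lambda(z).
\]

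The sum on the right is a Riemann sum for $\Lambda$ over $S^{1}$ with $mp+1$ equispaced nodes. Since $\Lambda$ is bounded, integer-valued, and has finitely many jump discontinuities (located at the unit-circle eigenvalues of the twisted monodromy), the sum is asymptotic to the integral, and dividing by $m$ gives
\[
\avind(\gamma)=\lim_{m\to\infty}\frac{\ind(\gamma^{mp+1})}{m}=\frac{p}{2\pi}\int_{0}^{2\pi}\Lambda(e^{i\theta})\,\diff\theta\geq 0.
\]
For the second assertion, $\Lambda$ is non-negative and integer-valued, so the integral vanishes precisely when $\Lambda\equiv 0$, in which case $\ind(\gamma^{mp+1})=0$ for every $m\in\N$. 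The main obstacle will be justifying the Fourier/spectral splitting of the Hessian across iterates in the \emph{twisted} setting: one must keep careful track of the shift $z_{0}$ coming from $dI|_{\gamma(0)}$ and verify that the spaces of $z$-equivariant variations are mutually orthogonal with respect to both the $L^{2}$ inner product and the second variation of $E$. This is the step where the argument genuinely departs from Bott's classical version for closed geodesics.
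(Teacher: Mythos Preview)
The paper does not actually prove this lemma: it is stated with a trailing \qed\ and attributed to Grove--Tanaka, so there is no in-paper argument to compare against. Your plan to recover it via a Bott-type iteration formula is exactly the strategy Grove--Tanaka use, and once such a formula is in hand your deductions (the limit equals a non-negative integral of an integer-valued step function, and vanishing of the integral forces $\Lambda\equiv 0$, hence all indices vanish) are correct.

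That said, the specific splitting you write down needs repair. Two issues. First, $dI$ is a linear isometry between normal spaces, not a scalar, so ``a fixed $z_0\in S^1$ encoding the action of $dI$'' only makes sense after you have decomposed the complexified normal bundle into eigenbundles of the appropriate operator (concretely, of $dI^{p}$, which does act on each fibre since $I^{p}$ fixes $\gamma$ pointwise); you then get a separate twist on each eigenbundle, not a single $z_0$. Second, and more seriously, the Fourier step as you describe it does not go through: you define $\Lambda(z)$ via the Floquet condition $\xi(t+p)=z\,\xi(t)$, but the variation space for $\gamma^{mp+1}$ consists of sections over an interval of length $mp+1$ with the $dI$-twisted boundary condition, and $mp+1$ is \emph{not} an integer multiple of $p$ in general. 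Hence there is no $\Z/(mp+1)\Z$-action by $p$-shifts to diagonalize, and the index does not decompose as a sum over $(mp+1)$-th roots of anything. The sum you wrote is a Riemann sum only formally; the actual orthogonal splitting you invoke does not exist in this form.

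What Grove--Tanaka do instead is closer to a comparison argument: they relate $\ind(\gamma^{mp+1})$ to the ordinary closed-geodesic indices of the iterates of the underlying periodic geodesic $\gamma|_{[0,p]}$, showing the difference is uniformly bounded in $m$. The classical Bott formula for the closed geodesic then gives the mean index $\avind(\gamma)$ and the dichotomy. So your endgame is right, but the middle step---how the $I$-twisted index is expressed in terms of a Bott function---has to be rebuilt.
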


In order to discuss further properties, it is more convenient (and formally equivalent) to see each critical orbit $\orb(\gamma^{mp+1})$ of our sequence as the same critical orbit $\orb(\gamma)$ living in different spaces. For all $\tau>0$ we introduce the space of invariant paths
\[
\Lambda^{\tau}(M;I)=\{\zeta\in\Wloc(\R;M)\ |\ I(\zeta(t))=\zeta(t+\tau)\ \forall t\in\R\},
\]
and the average energy $E^\tau:\Lambda^{\tau}(M;I)\to\R$ given by
\[
E^\tau(\zeta)=\frac1\tau \int_0^\tau g(\dot\zeta(t),\dot\zeta(t))\,\diff t.
\]
In order to go forth and back from this new setting to the former one, we can employ the diffeomorphism $\Psi^\tau:\Lambda^\tau(M;I)\to\Lambda(M;I)$ given by $\Psi^\tau(\zeta)=\zeta^\tau$, which relates the energies as $E\circ \Psi^\tau=\tau^2 E^\tau$. The curve $\gamma^{mp+1}\in\Lambda(M;I)$, which is a critical point of $E$, corresponds to the curve $\gamma\in\Lambda^{mp+1}(M;I)$, which is a critical point of $E^{mp+1}$. The next lemma tells us that, if we want to understand the behavior of the function $E^{mp+1}$ near the critical	point $\gamma^{mp+1}$ for all $m\in\N$, it is enough to do it for finitely many values of $m$.
\begin{lem}[Grove-Tanaka]
\label{l:Omegas}
There exists finitely many spaces $\Omega_1,...,\Omega_n$, integers $p_1,...,p_n$ that are multiples of the period $p$, and a partition $\N=\M_1\cup\M_2\cup...\cup\M_n$ with the following properties:
\begin{itemize}
\item[(i)] each space $\Omega_i$ contains $\gamma$, is contained in  
\[\Lambda^{p_i}(M;I)\cap\bigcap_{m\in\M_i}\Lambda^{mp+1}(M;I)\]
as a Hilbert submanifold of each of the spaces involved in the intersection, and is invariant by the gradient flow of $E^{mp+1}$ for all $m\in\M_i$;
\item[(ii)] we have $E^{p_i}|_{\Omega_i}=E^{mp+1}|_{\Omega_i}$ for all $m\in\M_i$;
\item[(iii)] the null space of the Hessian of $E^{mp+1}$ at $\gamma$ is the same as the null space of the Hessian of restricted function $E^{mp+1}|_{\Omega_i}$ at $\gamma$ for all $m\in\M_i$. 
\hfill\qed
\end{itemize}

\end{lem}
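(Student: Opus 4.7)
\emph{Proof plan.} This lemma is the $I$-equivariant analogue of Gromoll--Meyer's local Morse theory for iterated closed geodesics, and my plan is to adapt their construction as Grove and Tanaka do. The central algebraic object is a pair of commuting linear operators on the space of initial data for Jacobi fields along $\gamma$: the Poincar\'e map $\mathcal{Q}_p$ associated with one full period of $\gamma$, and the $I$-equivariant one-step map $\mathcal{P}=(\diff I)^{-1}\circ\mathcal{Q}_1$ obtained by Jacobi-propagating for time $1$ and then applying $\diff I^{-1}$ to land back in $\Tan_{\gamma(0)}M$. Since $I$ is an isometry and $\gamma$ is $I$-equivariant, these two maps commute, and a direct calculation identifies the null space of the Hessian of $E^{mp+1}$ at $\gamma$ with the kernel of $\mathcal{P}\circ\mathcal{Q}_p^m - \id$.

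The first step is to exploit the discreteness of the spectrum: $\mathcal{P}$ and $\mathcal{Q}_p$ have only finitely many joint eigenvalues, and the resonance condition ``$\mu\,\nu^m=1$'' (for a joint eigenvalue pair $(\mu,\nu)$) depends on $m$ only through its residue modulo some finite integer $N$. Partitioning $\N$ according to this residue produces the required finite decomposition $\N = \M_1 \cup \cdots \cup \M_n$; for each class $\M_i$ I would let $p_i$ be a conveniently chosen common value (or least common multiple) of the shifts $mp+1$ for $m\in\M_i$, so that the whole class of curves I am about to build simultaneously satisfies all the required $I$-equivariances.

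The second step is to realise each $\Omega_i$ by a broken-geodesic reduction. Choose a subdivision of $[0,p_i]$ with mesh strictly smaller than the injectivity radius of $(M,g)$ and compatible with all the shifts $mp+1$ for $m\in\M_i$, and let $\Omega_i$ be the finite-dimensional manifold of curves near $\gamma$ that are minimising geodesic arcs between consecutive breakpoints, extended to all of $\R$ by the $I$-equivariance, and whose breakpoint data satisfy the additional periodicity required for membership in every $\Lambda^{mp+1}(M;I)$ with $m\in\M_i$. Property (ii) is then immediate, because the energy density of such a curve over one fundamental domain equals its density over any other, and (iii) is the classical statement that a fine enough broken-geodesic reduction preserves the Morse-theoretic data of the Hessian at the critical point.

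The main obstacle will be proving the simultaneous gradient-flow invariance asserted in (i), namely that the negative gradient of $E^{mp+1}$ computed in the ambient space $\Lambda^{mp+1}(M;I)$ is tangent to $\Omega_i$ for \emph{every} $m\in\M_i$ at once. A naive Morse chart handles one energy at a time, whereas here one must exploit the residue-class choice of $p_i$ and the compatibility of the broken-geodesic subdivision with the shifts to argue that all these gradients in fact coincide, on $\Omega_i$, with the intrinsic gradient of the single functional $E^{p_i}|_{\Omega_i}$. This coincidence is the heart of Grove and Tanaka's refinement of the Gromoll--Meyer construction, and once it is established the three claimed properties follow.
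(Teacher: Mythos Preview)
The paper does not actually prove this lemma: it is stated as a result of Grove and Tanaka, closed with a \qed, and the reader is referred to the original papers and to the author's earlier survey for details. So there is no in-paper argument to compare against directly.

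That said, your spectral picture for the partition is the right one: the null space of the Hessian of $E^{mp+1}$ at $\gamma$ is indeed governed by a commuting pair of linearised return maps, and the resonance condition depends on $m$ only through finitely many residue classes. This is the mechanism behind the decomposition $\N=\M_1\cup\cdots\cup\M_n$.

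Where your outline departs from the Grove--Tanaka construction is in the nature of the spaces $\Omega_i$. You propose realising them via a broken-geodesic reduction, which produces finite-dimensional manifolds; but the lemma asserts that each $\Omega_i$ is a \emph{Hilbert} submanifold of every ambient path space in the intersection, hence infinite-dimensional. In Grove--Tanaka the $\Omega_i$ are not obtained by discretisation at all: they are fixed-point sets of certain finite groups of isometries of the path space (generated by suitable powers of $I$ composed with time-shifts that fix $\gamma$). Because the energy functionals $E^{mp+1}$ and the Riemannian metric on the path space are invariant under these isometric actions, the gradient of each $E^{mp+1}$ is automatically tangent to the fixed-point set, which resolves for free precisely the simultaneous gradient-flow invariance you correctly identify as the main obstacle in a broken-geodesic scheme. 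Your proposed choice of $p_i$ as a ``least common multiple of the shifts $mp+1$ for $m\in\M_i$'' is also not workable, since each $\M_i$ is infinite; in the actual construction $p_i$ emerges as (a multiple of) the period associated with the finite isometry group defining $\Omega_i$, and the inclusion $\Omega_i\subset\Lambda^{p_i}(M;I)$ comes from that symmetry rather than from any arithmetic of the $mp+1$.
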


\subsection{Bangert's homotopy and isometry-invariant geodesics}\label{s:Bangert}
A sufficiently iterated closed geodesic cannot be detected by a 1-dimensional minimax scheme. This fundamental remark was first made by Bangert \cite{Bangert:Closed_geodesics_on_complete_surfaces}, and played a crucial role in the proof of many existence results in Riemannian geometry (see e.g.~\cite{Ballmann_Thorbergsson_Ziller:Closed_geodesics_and_the_fundamental_group, Bangert_Klingenberg:Homology_generated_by_iterated_closed_geodesics,Bangert_Hingston:Closed_geodesics_on_manifolds_with_infinite_Abelian_fundamental_group, Bangert:On_the_existence_of_closed_geodesics_on_two_spheres, Abbondandolo_Macarini_Paternain:On_the_existence_of_three_closed_magnetic_geodesics_for_subcritical_energies}). A detailed account that employs our notation can be found in~\cite[Section~3.2]{Mazzucchelli:On_the_multiplicity_of_isometry_invariant_geodesics_on_product_manifolds}.

\begin{lem}[Bangert]
\label{l:Bangert}
Let $\Theta:[0,1]\to\Lambda^p(M;\id)$ be a continuous path. For each $m\in\N$, let $\iota^m:\Lambda^p(M;\id)\hookrightarrow\Lambda^{mp}(M;\id)$ denote the inclusion. The path $\iota^m\circ\Theta$ can be homotoped, fixing its endpoints, to a new path $\Theta_m:[0,1]\to\Lambda^{mp}(M;\id)$ such that
\[
E^{mp}(\Theta_{m}(s))\leq \max\big\{ E^{p}(\Theta(0)), E^p(\Theta(1)) \big\} + \frac{\mathrm{const}}{m},
\qquad
\forall s\in[0,1],
\]
where $\mathrm{const}>0$ is a quantity independent of $m$. Moreover, there is a continuous function $\sigma_m:[0,1]\to[0,1]$ such that $\Theta_m(s)(0)=\Theta(\sigma_m(s))(0)$ for all $s\in[0,1]$.
\hfill\qed
\end{lem}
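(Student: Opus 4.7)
The plan is to deform the path $\iota^m\circ\Theta$ one piece at a time. We view a curve in $\Lambda^{mp}(M;\id)$ as a cyclic concatenation of $m$ pieces of duration $p$, and partition the parameter interval $[0,1]$ into $m$ equal subintervals. On the $k$-th of these, only the $k$-th piece is deformed, via the family $\Theta(r)$ with $r\in[0,1]$ the affine coordinate on the subinterval, from $\Theta(0)$ to $\Theta(1)$; the other pieces are held fixed (equal to $\Theta(1)$ for indices less than $k$, and to $\Theta(0)$ for indices greater than $k$). This automatically gives $\Theta_m(0)=\iota^m(\Theta(0))$ and $\Theta_m(1)=\iota^m(\Theta(1))$, and the construction glues continuously across stage boundaries because at $r=1$ the transitioning piece has become $\Theta(1)$.

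To handle the variable basepoints $q(r):=\Theta(r)(0)$, we insert three short transition arcs of a fixed duration $a>0$ (independent of $m$) at the junctions where the basepoints of neighboring pieces disagree: one from $q(1)$ to $q(r)$, one from $q(r)$ to $q(0)$, and a wrap-around arc from $q(0)$ to $q(1)$. Two of the three arcs degenerate to constants at the boundary values $r=0,1$, which keeps the construction continuous across stage boundaries even though the role of the three slots with respect to the piece indices swaps. Setting $E^*:=\max\{E^p(\Theta(0)),E^p(\Theta(1))\}$, $C:=\sup_r E^p(\Theta(r))$, and $D:=\mathrm{diam}(M,g)$, the $m-1$ untouched pieces contribute at most $(m-1)pE^*$ to $\int|\dot\eta_s|^2\,\diff t$, the transitioning piece at most $pC$, and the three arcs at most $3D^2/a$. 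The concatenation has total duration $mp+3a$, which we rescale linearly to duration $mp$; including the resulting rescaling factor $1+3a/(mp)$ gives, uniformly in $s$,
\[
E^{mp}(\Theta_m(s)) \le \Bigl(1+\tfrac{3a}{mp}\Bigr)\cdot\frac{(m-1)pE^*+pC+3D^2/a}{mp} = E^* + \frac{\mathrm{const}}{m}.
\]

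Finally, the basepoint condition is arranged by placing $t=0$ at the start of the transitioning piece, so that $\Theta_m(s)(0)=q(r)$ and $\sigma_m(s)=r$ throughout the interior of each stage. The jump that would otherwise occur across a stage boundary $s=k/m$ (where $r$ resets from $1$ to $0$) is resolved by interposing a short subinterval on which the loop in $\Lambda^{mp}(M;\id)$ is kept fixed but its parametrization is time-shifted so that the basepoint traverses the wrap-around arc backward from $q(1)$ to $q(0)$; if this arc is chosen to be a $\W$-reparametrization of the path $q$ itself (approximated by a piecewise geodesic if necessary), the basepoint can everywhere be written as $q(\sigma_m(s))$ for a continuous $\sigma_m:[0,1]\to[0,1]$, and the interstage time-shifts contribute nothing to $E^{mp}$ by translation invariance. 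The principal subtlety is precisely this coordinated continuity of the curve and of the basepoint across stage boundaries; once arranged, the energy estimate above transfers verbatim.
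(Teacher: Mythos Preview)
The paper does not give its own proof of this lemma: the statement is attributed to Bangert, a reference to a detailed account in the author's earlier paper is given, and the lemma is closed with a \qed.  Your proposal reproduces precisely the standard Bangert construction (staggered replacement of the $m$ period-$p$ slots, with short connecting arcs at the three basepoint mismatches and a linear time-rescaling back to total length $mp$), and the energy bookkeeping is correct; the displayed identity should of course be an inequality, with the right-hand side of the form $E^*+O(1/m)$.

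One point you leave implicit is the actual homotopy, with fixed endpoints, from $\iota^m\circ\Theta$ to your $\Theta_m$.  You construct $\Theta_m$ and check that its endpoints agree with those of $\iota^m\circ\Theta$, but the lemma asserts that the two paths are homotopic rel endpoints, not merely that they share endpoints.  The missing homotopy is obtained by continuously staggering the slot-wise reparametrizations: for $u\in[0,1]$ let the $j$-th slot at path-time $s$ be $\Theta\big((1-u)s+u\,\phi_j(s)\big)$, where $\phi_j$ is the piecewise-affine function equal to $0$ on $[0,(j-1)/m]$, to $1$ on $[j/m,1]$, and linear in between, with the three connecting arcs varying accordingly.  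At $u=0$ this is $\iota^m\circ\Theta$; at $u=1$ it is your $\Theta_m$; and the endpoints $s=0,1$ are fixed throughout since $\phi_j(0)=0$ and $\phi_j(1)=1$ for every $j$.  With this addition the argument is complete and matches the literature proof the paper cites.
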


By means of Bangert's Lemma and the results of section~\ref{s:local_properties}, we obtain the following important application in the context of isometry-invariant geodesics.

\begin{lem}\label{l:Bangert_for_I_invariant_geodesics}
Let $\gamma\in\Lambda(M;I)$ be an $I$-invariant geodesic that is periodic with minimal period $p\geq1$. For all integers $m$ large enough, every  path $\Theta:[-1,1]\to\Lambda(M;I)$ such that $\Theta(0)=\gamma^{mp+1}$ and $E(\Theta(s))<E(\gamma^{mp+1})$ for all $s\neq0$ can be homotoped, with fixed endpoints, inside the sublevel set $\{E<E(\gamma^{mp+1})\}$.
\end{lem}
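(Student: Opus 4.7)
The plan is to combine Bangert's Lemma~\ref{l:Bangert} with the Grove--Tanaka local model of Lemma~\ref{l:Omegas}. The intuition is that for $m$ large, $\gamma^{mp+1}$, viewed via $\Psi^{mp+1}$ as $\gamma$ inside $\Lambda^{mp+1}(M;I)$, essentially consists of $\approx m$ iterated copies of the underlying closed geodesic of minimal period $p$, so any variation of it inside a small neighborhood can be redistributed over these many iterations at energy cost $O(1/m)$.

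First, I would localize the problem. By continuity of $E\circ\Theta$, choose $\varepsilon\in(0,1]$ so small that $E(\Theta(\pm\varepsilon))<E(\gamma^{mp+1})$; it then suffices to homotope $\Theta|_{[-\varepsilon,\varepsilon]}$, rel endpoints, inside the open sublevel set $\{E<E(\gamma^{mp+1})\}$. Using the identity $E\circ\Psi^{mp+1}=(mp+1)^2 E^{mp+1}$, this translates into homotoping the corresponding path $\tilde\Theta:=(\Psi^{mp+1})^{-1}\circ\Theta|_{[-\varepsilon,\varepsilon]}$, which passes through $\gamma$ at $s=0$, below the critical level $E^{mp+1}(\gamma)$ in $\Lambda^{mp+1}(M;I)$.

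Next, I would use Lemma~\ref{l:Omegas} to isolate the degenerate directions of $\gamma$. Pick the index $i$ with $m\in\M_i$; by part~(iii), all null directions of the Hessian of $E^{mp+1}$ at $\gamma$ are tangent to $\Omega_i$, so a generalized Morse lemma provides, in a neighborhood of $\gamma$, a splitting of $E^{mp+1}$ into a non-degenerate quadratic form $Q$ on the normal bundle of $\Omega_i$ plus the restricted functional $E^{p_i}|_{\Omega_i}$ along $\Omega_i$. A cut-off deformation along the stable and unstable directions of $Q$ homotopes $\tilde\Theta$, rel endpoints, so that in a smaller neighborhood of $s=0$ it lies inside $\Omega_i$, without raising the energy. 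The residual problem is to push a path in $\Omega_i$ through $\gamma$ below $E^{p_i}(\gamma)=E^{mp+1}(\gamma)$.

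Finally, I would apply Bangert's Lemma~\ref{l:Bangert} transcribed to the equivariant submanifold $\Omega_i$. The decisive iteration is provided by the ambient space $\Lambda^{mp+1}(M;I)$, in which $\gamma$ is traversed approximately $m$ times; Bangert's concatenation/iteration trick, performed while respecting the symmetry $I(\zeta(t))=\zeta(t+mp+1)$ and preserving the basepoint evaluation, then yields a homotopy whose $E^{mp+1}$-energy exceeds the maximum of the endpoint energies by at most $\mathrm{const}/m$. For $m$ large this excess is absorbed by the endpoint energy gap, so the homotopy remains strictly below $E^{mp+1}(\gamma)$; pushing it back via $\Psi^{mp+1}$ and concatenating with the unchanged portion $\Theta|_{[-1,-\varepsilon]\cup[\varepsilon,1]}$ completes the argument.

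The main obstacle is this last step. Lemma~\ref{l:Bangert} is naturally set on the free loop space $\Lambda^p(M;\id)$, whereas our path lives in the $I$-equivariant submanifold $\Omega_i\subset\Lambda^{p_i}(M;I)\cap\Lambda^{mp+1}(M;I)$, and the iteration count that produces the $1/m$ factor enters through the ambient $\Lambda^{mp+1}(M;I)$ rather than through $\Omega_i$ itself. Transferring Bangert's concatenation construction to this setting---while tracking both the symmetry $I(\zeta(t))=\zeta(t+mp+1)$ and the basepoint constraint needed to glue back with the unchanged outer portion of $\Theta$---requires an explicit description of paths in $\Omega_i$ as families of broken iterated geodesics, which is furnished by the Grove--Tanaka construction underlying Lemma~\ref{l:Omegas}.
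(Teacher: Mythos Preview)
Your overall strategy---localize near $s=0$, use the generalized Morse lemma together with Lemma~\ref{l:Omegas} to push the path onto $\Omega_i$, then invoke Bangert's Lemma~\ref{l:Bangert}---is the one the paper follows. But there is a genuine gap in your final step. The quantifiers in the lemma demand a threshold $m_0$ that works for \emph{every} path $\Theta$ through $\gamma^{mp+1}$. In your scheme you apply Bangert's Lemma directly to the $\Omega_i$-retraction of the given $\tilde\Theta$, obtaining an energy bound of the form $\max\{E^{p_i}(\tilde\Theta(\pm\varepsilon))\}+\mathrm{const}(\tilde\Theta)/m$. Both the Bangert constant and the endpoint gap $c-\max E^{p_i}(\tilde\Theta(\pm\varepsilon))$ depend on $\Theta$, so no uniform $m_0$ emerges: for any fixed $m$ one can choose $\Theta$ with endpoints arbitrarily close to level $c$, or so wild that its Bangert constant is enormous, and the required inequality fails.

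The paper supplies the missing idea. Inside a fixed $C^0$-neighborhood $\Upsilon\subset\Omega_i$ of $\gamma$, the sublevel set $\Upsilon^-=\{E^{p_i}|_\Upsilon<c\}$ has only finitely many path-components $\Upsilon_1^-,\ldots,\Upsilon_r^-$, and by Lemma~\ref{l:Omegas}(ii) this decomposition is independent of $m\in\M_i$. One fixes, once and for all, finitely many ``standard'' paths $\Psi_{\alpha\beta}$ through $\gamma$ joining $\Upsilon_\alpha^-$ to $\Upsilon_\beta^-$; Bangert's Lemma (with the gluing you allude to in your last paragraph) is applied only to these finitely many fixed paths, yielding a single threshold $m_0$. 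The remaining work is to show that any admissible $\Theta$ can be homotoped, rel endpoints and staying below $c$ for $s\neq 0$, so that near $s=0$ it coincides with one of the $\Psi_{\alpha\beta}$. This reduction uses the Morse-lemma retraction onto the characteristic manifold, which requires $\ind(\gamma^{mp+1})=0$; that is why the paper disposes of the case $\avind(\gamma)>0$ separately at the outset, whereas your ``cut-off deformation along the stable and unstable directions of $Q$'' would not land in $\Omega_i$ when genuine negative directions are present.
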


\begin{proof}
If $\avind(\gamma)>0$, the statement is easy to prove: according to Lemma~\ref{l:iteration_formulae}, the Morse index $\ind(\gamma^{mp+1})$ becomes large if $m$ is large, and thus we obtain the desired homotopy by pushing our path in a suitable negative direction of the Hessian of $E$ near $\gamma^{mp+1}$. Let us now focus on the other, hard, case: $\avind(\gamma)=0$. We work in the setting of section~\ref{s:local_properties}: we see $\gamma^{mp+1}\in\Lambda(M;I)$ as the curve $\gamma\in\Lambda^{mp+1}(M;I)$ with average energy
\[
c:=E^{mp+1}(\gamma)=\frac{1}{mp+1}\int_0^{mp+1} g(\dot\gamma(t),\dot\gamma(t))\,\diff t=g(\dot\gamma(0),\dot\gamma(0)).
\]
Consider the partition $\N=\M_1\cup\M_2\cup...\cup\M_n$ given by Lemma~\ref{l:Omegas}. We choose any $\M_i$ and the corresponding space $\Omega_i$. We recall that all $\zeta\in\Omega_i$ are $p_i$-periodic curves, where $p_i$ is some multiple of $p$. We fix $r>0$ smaller than half the injectivity radius of $(M,g)$, and we define the following $C^0$-neighborhood of $\gamma$ in $\Omega_i$
\[
\Upsilon:=
\big\{
\zeta\in\Omega_i\ \big|\ \dist(\zeta(t),\gamma(t))<r\ \ \forall t\in\R
\big\}.
\]
Consider the sublevel set $\Upsilon^-:=\{E^{p_i}|_{\Upsilon}<c\}\subset\Upsilon$. We denote by $\Upsilon_1^-,...,\Upsilon_r^-$ the path-connected components of $\Upsilon^-$. Notice that they are finitely many: $r-1$ is bounded from above by the rank of the local homology  
$\Hom_1(\Upsilon^-\cup\{\gamma\},\Upsilon^-;\Z)$,
which is finite. For all pair of distinct values $\alpha,\beta\in\{1,...,r\}$ we fix a continuous path 
\begin{align}
\label{e:standard_path}
\Psi_{\alpha\beta}:[-1,1]\to\Upsilon^-\cup\{\gamma\}
\end{align}
such that $\Psi_{\alpha\beta}(0)=\gamma$, $\Psi_{\alpha\beta}|_{[-1,0)}$ is contained in $\Upsilon_\alpha^-$, whereas $\Psi_{\alpha\beta}|_{(0,1]}$ is contained in $\Upsilon_\beta^-$.

Let $\nu^m:\Omega_i\hookrightarrow\Lambda^{mp+1}(M;I)$ be the inclusion, where $m\in\M_i$. We claim that, for all $\Psi\in\{\Psi_{\alpha\beta}\ |\ \alpha\neq\beta\}$ and $m$ sufficiently large, $\nu^m\circ\Psi$ can be homotoped, with fixed endpoints, to a path that is contained in the sublevel set $\{E^{mp+1}<c\}$. This can be seen as follows. We denote by $m_0\in\N$ the largest integer such that $m_0p_i<mp+1$, and by $\iota^{m_0}:\Omega_i\to\Lambda^{m_0 p_i}(M;\id)$ the inclusion. By Lemma~\ref{l:Bangert}, the path $\iota^{m_0}\circ\Psi$ can be homotoped, with fixed endpoints, to a path $\Psi_{m_0}$ such that
\begin{align}\label{e:Bangert_estimate_in_use}
E^{m_0 p_i}(\Psi_{m_0}(s))\leq \max\big\{ E^{p_i}(\Psi(0)), E^{p_i}(\Psi(1)) \big\} + \frac{\mathrm{const}}{m_0},
\qquad
\forall s\in[0,1].
\end{align}
Moreover, we have $\Psi_{m_0}(s)(0)=\Psi(\sigma_{m_0}(s))(0)$ for a suitable continuous function $\sigma_{m_0}:[0,1]\to[0,1]$. We define the continuous path $\Psi_{m}':[-1,1]\to\Lambda^{mp+1}(M;I)$ to be
\[
\Psi_m'(s)(t):=
\left\{
  \begin{array}{lll}
    \Psi_{m_0}(s)(t) &  & \mbox{if }t\in[0,m_0p_i], \vspace{3pt} \\ 
    \Psi(\sigma_{m_0}(s))(t) &  & \mbox{if }t\in[m_0p_i,mp+1]. \\ 
  \end{array}
\right.
\]
This path is clearly homotopic, with fixed endpoints, to $\nu^m\circ\Psi$: indeed, it suffices to extend in the obvious way the homotopy from $\iota^{m_0}\circ\Psi$ to $\Psi_{m_0}$. The average energy of the curves along this path can be estimated as
\begin{align*}
E^{mp+1}(\Psi_m'(s))
=\,
& \frac{1}{mp+1}
\Bigg[
\int_0^{m_0p_i} g\big(\tfrac{\diff}{\diff t}\Psi_{m_0}(s)(t),\tfrac{\diff}{\diff t}\Psi_{m_0}(s)(t)\big)\,\diff t \\
& +
\int_{m_0p_i}^{mp+1} g\big(\tfrac{\diff}{\diff t}\Psi(\sigma_{m_0}(s))(t),\tfrac{\diff}{\diff t}\Psi(\sigma_{m_0}(s))(t)\big)\,\diff t
\Bigg]\\
\leq\,
& \frac{1}{mp+1}
\Bigg[
\int_0^{m_0p_i} g\big(\tfrac{\diff}{\diff t}\Psi_{m_0}(s)(t),\tfrac{\diff}{\diff t}\Psi_{m_0}(s)(t)\big)\,\diff t \\
& +
\int_{m_0p_i}^{(m_0+1)p_i} g\big(\tfrac{\diff}{\diff t}\Psi(\sigma_{m_0}(s))(t),\tfrac{\diff}{\diff t}\Psi(\sigma_{m_0}(s))(t)\big)\,\diff t
\Bigg]\\
=\,
& \frac{1}{mp+1} \Big( m_0p_i E^{m_0p_i}(\Psi_{m_0}(s))   +  p_i E^{p_i}(\Psi(\sigma_{m_0}(s)))\Big),
\end{align*}
where for the  inequality we have used the fact that $mp+1-m_0p_i<p_i$. By~\eqref{e:Bangert_estimate_in_use} and since $E^{p_i}(\Psi(\sigma_{m_0}(s)))<c$ we further obtain
\begin{align*}
E^{mp+1}(\Psi_m'(s))
\leq\,
& \frac{m_0p_i}{mp+1} \Big( \max\big\{ E^{p_i}(\Psi(0)), E^{p_i}(\Psi(1)) \big\} + \frac{\mathrm{const}}{m_0}  \Big)
+ \frac{p_i\, c}{mp+1}\\
\leq\,
& \max\big\{ E^{p_i}(\Psi(0)), E^{p_i}(\Psi(1)) \big\} + \frac{p_i\, \mathrm{const}}{mp+1}  
+ \frac{p_i\, c}{mp+1}.
\end{align*}
The second and third summand in the previous line go to zero as $m\to\infty$, whereas both $E^{p_i}(\Psi(0))$ and $E^{p_i}(\Psi(1))$ are bounded from above by $c-\delta$, for some $\delta>0$. Therefore $E^{mp+1}(\Psi_m'(s))<c$ provided $m\in\M_i$ is large enough.

In order to conclude the proof of the lemma, all is left to do is to establish the following claim: let $\Theta:[-1,1]\to\Lambda^{mp+1}(M;I)$ be any continuous path with $\Theta(0)=\gamma$ and $E^{mp+1}(\Theta(s))<c$ for all $s\neq0$; then, there is a homotopy, with fixed endpoints, from $\Theta$ to a new path $\Theta'$ such that $E^{mp+1}(\Theta'(s))<c$ for all $s\neq0$, and the restriction $\Theta'|_{[-\epsilon,\epsilon]}$, for some $\epsilon>0$, coincides with  one of our standard paths~\eqref{e:standard_path}. 

Let us prove this claim.  Consider a characteristic manifold $\Upsilon'\subset\Upsilon$ for the function $E^{mp+1}|_{\Upsilon}$ at $\gamma$. Namely, $\Upsilon'$ is a finite-dimensional submanifold diffeomorphic to a ball, containing the critical point $\gamma$ in its interior, whose tangent space $\Tan_\gamma\Upsilon'$ is the nullspace of the Hessian of $E^{mp+1}$ at $\gamma$, and such that the gradient of $E^{mp+1}$ is tangent to $\Tan\Upsilon'$ along $\Upsilon'$. Since $\avind(\gamma)=0$, Lemma~\ref{l:iteration_formulae} tells us that the Morse index of $E^{mp+1}$ at $\gamma$ is zero. According to the generalized Morse Lemma (see e.g.~\cite[page~501]{Gromoll_Meyer:Periodic_geodesics_on_compact_Riemannian_manifolds} or \cite[page~72]{Chang:Infinite_dimensional_Morse_theory_and_multiple_solution_problems}), $\Upsilon'$ has a tubular neighborhood $\Upsilon''\subset\Lambda^{mp+1}(M;I)$ and a diffeomorphism $\Phi:\Upsilon'\times B\to\Upsilon''$ (where $B$ is a Hilbert ball) that maps the zero section $\Upsilon'\times\{0\}$ to $\Upsilon'$ and such that
\[
E^{mp+1}\circ\Phi(\zeta',x)=E^{mp+1}(\zeta') + \|x\|^2,\qquad\forall(\zeta',x)\in\Upsilon'\times B.
\] 
In particular, we have a continuous deformation retraction $r_t:\Upsilon''\to\Upsilon''$ given by 
\[r_t\circ\Phi(\zeta',x)=(\zeta',(1-t)x).\]
Notice that the deformation decreases the value of $E^{mp+1}$ (away from the characteristic manifold $\Upsilon'$). Let $\epsilon>0$ be such that the portion of the path $\Theta|_{[-3\epsilon,3\epsilon]}$ is contained inside $\Upsilon''$. We take a continuous function $\rho:[-3\epsilon,3\epsilon]\to[0,1]$ supported inside $(-3\epsilon,3\epsilon)$ and such that $\rho|_{[-2\epsilon,2\epsilon]}\equiv1$, and we replace  $\Theta|_{[-3\epsilon,3\epsilon]}$ by the homotopic path $s\mapsto r_{\rho(s)}\circ\Theta(s)$. This shows that we can assume our given path $\Theta$ to satisfy $\Theta(s)\in\Upsilon'$ for all $s\in[-2\epsilon,2\epsilon]$. In particular $\Theta(-2\epsilon)\in\Upsilon^-_\alpha$ and $\Theta(2\epsilon)\in\Upsilon^-_\beta$ for some $\alpha,\beta$. Let us choose a continuous path $\Theta_\alpha:[-2\epsilon,-\epsilon]\to\Upsilon_\alpha^-$ joining $\Theta(-2\epsilon)$ and $\Psi_{\alpha\beta}(-1)$, and another continuous path  $\Theta_\beta:[-2\epsilon,-\epsilon]\to\Upsilon_\beta^-$ joining $\Psi_{\alpha\beta}(1)$ and $\Theta(2\epsilon)$. We  concatenate $\Theta_\alpha$, $\Psi_{\alpha\beta}$, and $\Theta_\beta$ in order to obtain the continuous path $\Theta':[-2\epsilon,2\epsilon]\to\Upsilon$, i.e.
\[
\Theta'(s)=
\left\{
  \begin{array}{lll}
    \Theta_\alpha(s) &  & \mbox{if } s\in[-2\epsilon,-\epsilon], \vspace{3pt} \\ 
    \Psi_{\alpha\beta}(s/\epsilon) &  & \mbox{if } s\in[-\epsilon,\epsilon], \vspace{3pt}  \\ 
    \Theta_\beta(s) &  & \mbox{if } s\in[\epsilon,2\epsilon].
  \end{array}
\right.
\]
Since both paths $\Theta|_{[-2\epsilon,2\epsilon]}$ and $\Theta'|_{[-2\epsilon,2\epsilon]}$ lie inside the small open set $\Upsilon$,  there exists a homotopy $H_t:[-2\epsilon,2\epsilon]\to\Lambda^{mp+1}(M;I)$ such that $H_0=\Theta|_{[-2\epsilon,2\epsilon]}$, $H_1=\Theta'|_{[-2\epsilon,2\epsilon]}$, and all the paths $H_t$ have the same endpoints. One such map can be constructed as follows. Assume for the sake of simplicity that $\Theta(s)$ and $\Theta'(s)$ are piecewise-smooth $I$-invariant curves for  all $s\in(-2\epsilon,2\epsilon)$. We can always achieve this by  $C^0$-small homotopies of the paths $\nu^m\circ\Theta|_{[-2\epsilon,2\epsilon]}$ and $\nu^m\circ\Theta'|_{[-2\epsilon,2\epsilon]}$ with fixed endpoints, where $\nu^m:\Omega_i\hookrightarrow\Lambda^{mp+1}(M;I)$ denotes the inclusion as before. We set $t\mapsto H_t(s)(t')$ to be the shortest geodesic parametrized with constant speed joining the points $\Theta(s)(t')$ and $\Theta'(s)(t')$. Since
\[\dist(\Theta(s)(t'),\Theta'(s)(t')) < 2r <\injrad(M,g),\qquad\forall s\in[-2\epsilon,2\epsilon],\]
such a geodesic is unique. Moreover, since the isometry $I$ maps geodesics to geodesics, we have $I(H_t(s)(0))=H_t(s)(mp+1)$. Therefore, $H_t$ is well defined as a homotopy of paths in $\Lambda^{mp+1}(M;I)$. Finally, we extend $\Theta'$ to a continuous path of the form $\Theta':[0,1]\to\Lambda^{mp+1}(M;I)$ by setting $\Theta'\equiv\Theta$ outside $[-2\epsilon,2\epsilon]$. This proves our claim, and concludes the proof of the lemma.
\end{proof}

\section{Multiplicity results}\label{s:multiplicity_results}

\subsection{Fundamental groups of rank at least two}\label{s:multiplicity_results_1}

Throughout this section we will consider a closed connected Riemannian manifold $(M,g)$ whose fundamental group is abelian, equipped with an isometry $I$ that is homotopic to the identity. If the rank of the fundamental group is larger than one, we can easily find infinitely many closed geodesics on the manifold: it suffices to pick two linearly independent elements of infinite order $z_1,z_2\in\pi_1(M)$, and to consider the closed geodesic $\gamma_k$ of minimal length among all the curves freely homotopic to a representative of\footnote{Here and in the following, we employ the additive notation for the abelian fundamental group.} $k z_1+z_2$; all curves $\gamma_k$ and $\gamma_j$, for $k\neq j$, are geometrically distinct. The situation is analogous for the case of $I$-invariant geodesics, even though we must be slightly more clever in the selection of the elements of the fundamental group employed to carry over the minimizations. 

\begin{thm}\label{t:rank_larger_than_one}
Let $(M,g)$ be a closed connected Riemannian manifold whose fundamental group is abelian of rank at least two. Every isometry $I$ homotopic to the identity has infinitely many $I$-invariant geodesics.
\end{thm}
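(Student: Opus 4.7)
The plan is to argue by contradiction, adapting the Bangert--Hingston-style selection to $I$-invariant geodesics via the bijection~\eqref{e:bijection_pi1_pi0} between $\pi_1(M)$ and $\pi_0(\Lambda(M;I))$.

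First I would suppose that $I$ has only finitely many invariant geodesics. By the Grove non-isolation property for open $I$-invariant geodesics recalled in Section~\ref{s:The_energy_function}, these are all periodic, say $\gamma^{(1)},\dots,\gamma^{(N)}$, with minimal periods $p_1,\dots,p_N$. Let $[\alpha_i]\in\pi_1(M)$ denote the homotopy class of the underlying closed curve $\gamma^{(i)}|_{[0,p_i]}$, and let $[\beta_i]\in\pi_1(M)$ be the class associated via~\eqref{e:bijection_pi1_pi0} to the component of $\Lambda(M;I)$ that contains $\gamma^{(i)}$. Unwinding~\eqref{e:inverse_homotopy_equivalence_of_path_spaces}, the first half of $\nu\bigl((\gamma^{(i)})^{mp_i+1}\bigr)$ traces $\gamma^{(i)}|_{[0,mp_i+1]}$, which (by $p_i$-periodicity) winds $m$ times around $\gamma^{(i)}|_{[0,p_i]}$ and then follows the same ``fractional arc'' $\gamma^{(i)}|_{[0,1]}$; the second half is the reverse homotopy path, independent of $m$. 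Hence, in the abelian group $\pi_1(M)$, the iterate $(\gamma^{(i)})^{mp_i+1}$ lies in the component associated to $m[\alpha_i]+[\beta_i]$ for every admissible $m\in\Z$.

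Next, since $\pi_1(M)\otimes\mathbb{Q}$ has dimension at least two, the union $\mathbb{Q}[\alpha_1]\cup\dots\cup\mathbb{Q}[\alpha_N]$ of finitely many one-dimensional subspaces is a proper subset; I would pick $z_1\in\pi_1(M)$ of infinite order with $z_1\notin\mathbb{Q}[\alpha_i]$ for every $i$, and $z_2\in\pi_1(M)$ linearly independent of $z_1$ over $\mathbb{Q}$. For each $k\in\N$, let $C_k\subset\Lambda(M;I)$ be the component associated via~\eqref{e:bijection_pi1_pi0} to $kz_1+z_2$. Since $E|_{C_k}$ is non-negative, smooth, and satisfies the Palais--Smale condition, the infimum of $E$ on $C_k$ is attained by some critical point $\gamma_k\in C_k$. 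By Lemma~\ref{l:stationary_curves}, all but finitely many of the $C_k$'s are free of stationary critical points, so all but finitely many $\gamma_k$'s are genuine $I$-invariant geodesics.

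By the contradiction hypothesis, each such $\gamma_k$ is a time-shift of an iterate $(\gamma^{(i)})^{mp_i+1}$, hence $kz_1+z_2=m[\alpha_i]+[\beta_i]$ for some indices $i=i(k)\in\{1,\dots,N\}$ and $m=m(k)\in\Z$. Since there are infinitely many such $k$ but only $N$ possible values of $i(k)$, pigeonholing yields an index $i_0$ and two distinct values $k\neq k'$ with $i(k)=i(k')=i_0$; subtracting the corresponding identities gives $(k-k')z_1\in\Z[\alpha_{i_0}]$, and since $z_1$ has infinite order this forces $z_1\in\mathbb{Q}[\alpha_{i_0}]$ in $\pi_1(M)\otimes\mathbb{Q}$, contradicting the choice of $z_1$. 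The only nontrivial technical point is the iteration rule $(\gamma^{(i)})^{mp_i+1}\leftrightarrow m[\alpha_i]+[\beta_i]$; once this is granted, the rest is elementary linear arithmetic in $\pi_1(M)\otimes\mathbb{Q}$ combined with the rank hypothesis and Lemma~\ref{l:stationary_curves}.
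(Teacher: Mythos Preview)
Your argument is correct and follows the same overall strategy as the paper's proof: minimize the energy on connected components of $\Lambda(M;I)$ indexed by fundamental-group elements, assume finitely many invariant geodesics, and derive a contradiction from the iteration rule together with the rank hypothesis. The iteration rule $(\gamma^{(i)})^{mp_i+1}\leftrightarrow m[\alpha_i]+[\beta_i]$ is exactly what the paper uses (cf.\ equations~\eqref{e:z1_factor}--\eqref{e:z2_factor}), and your justification via $\nu$ is correct.

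There is a small organizational difference worth noting. The paper fixes two generators $z_1,z_2$ of $\Z^2\subset\pi_1(M)$ \emph{a priori}, minimizes over the full two-parameter family $\Omega_{(k_1,k_2)}$, and then runs a nested double pigeonhole: first along the $k_2$-direction (for each fixed $k_1$) to find a repeated geodesic, then along the $k_1$-direction to find the same geodesic twice, yielding two incompatible expressions for $[\gamma|_{[0,p]}]$. You instead enumerate the finitely many geodesics first, then choose $z_1$ outside the finite union $\bigcup_i\Q[\alpha_i]$ in $\pi_1(M)\otimes\Q$; this lets you work with a single one-parameter family $kz_1+z_2$ and conclude with a single pigeonhole. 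Your route is a bit shorter and makes the linear-algebraic content of the rank-two hypothesis more transparent; the paper's route avoids tensoring with $\Q$ and does not need to list the geodesics explicitly before choosing the test classes. Both are valid and equivalent in strength.
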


\begin{proof}
The fundamental group of our Riemannian manifold has the form $\pi_1(M)\cong\Z^{2}\oplus H$, for some abelian group $H$. Fix two generators $z_1,z_2$ of $\Z^{2}\subset\pi_1(M)$. For all $\kk=(k_1,k_2)\in\Z^2$, we denote by $\Omega_{\kk}$ the connected component of the path space $\Lambda(M;I)$ corresponding to $k_1z_1+k_2z_2$ under the bijection~\eqref{e:bijection_pi1_pi0}.  Notice in particular that distinct $\kk$'s give rise to distinct $\Omega_{\kk}$'s.  We denote by $\orb(\gamma_{\kk})$ a global minimum of the energy function $E$ on the connected component $\Omega_{\kk}$. By Lemma~\ref{l:stationary_curves}, for all but finitely many $\kk$'s the curve $\gamma_{\kk}$ is not stationary, and thus it is a genuine $I$-invariant geodesic. We claim that the family  $\{\gamma_{\kk}\,|\,\kk\in\Z^2\}$ contains infinitely many (geometrically distinct) $I$-invariant geodesics. Assume by contradiction that our claim does not hold. In particular, there exists an infinite subset $\K_1\subset\N$, a curve $\gamma_1\in\Lambda(M;I)$ that is periodic with minimal period $p_1\geq1$, and a  function $\mu_1:\K_1\to\N$ such that
\begin{align*}
\gamma_{\kk}=\gamma_1^{\mu_1(k_2)p_1+1},\qquad\forall \kk=(k_1,k_2)\in\{1\}\times\K_1.
\end{align*}
We proceed iteratively for increasing values of $n\geq2$: there exists an infinite subset $\K_n\subset\K_{n-1}$, a curve $\gamma_n\in\Lambda(M;I)$ that is periodic with minimal period $p_n\geq1$, and a  function $\mu_n:\K_n\to\N$ such that
\begin{align}\label{e:iterated_curve}
\gamma_{\kk}=\gamma_n^{\mu_n(k_2)p_n+1},\qquad\forall \kk=(k_1,k_2)\in\{n\}\times\K_n.
\end{align}
By the pigeonhole principle, there exist integers $n_1<n_2$ such that $\gamma_{n_1}=\gamma_{n_2}=:\gamma$ (and thus $p_{n_1}=p_{n_2}=:p$). We take two integers $m_1,m_2\in\K_{n_2}$ with $m_1<m_2$. Now, consider the homotopy equivalence $\nu:\Lambda(M;I)\to\Lambda(M;\id)$ defined in~\eqref{e:inverse_homotopy_equivalence_of_path_spaces}. Notice that the effect of applying $\nu$ to an $I$-invariant curve $\zeta$ is to concatenate $\zeta|_{[0,1]}$ with a path that depends only on $\zeta(0)$ and the homotopy $I_t$. By equation~\eqref{e:iterated_curve}, we have
\begin{equation}\label{e:z1_factor}
\begin{split}
(n_1-n_2)z_1
&=
\big[\nu\big(\gamma^{\mu_{n_1}(m_1)p+1}\big)\big]-\big[\nu\big(\gamma^{\mu_{n_2}(m_1)p+1}\big)\big]\\
&=
(\mu_{n_1}(m_1)-\mu_{n_2}(m_1))[\gamma|_{[0,p]}], 
\end{split}
\end{equation}
and analogously
\begin{equation}\label{e:z2_factor}
\begin{split}
(m_1-m_2)z_2
&=
\big[\nu\big(\gamma^{\mu_{n_1}(m_1)p+1}\big)\big]-\big[\nu\big(\gamma^{\mu_{n_1}(m_2)p+1}\big)\big]\\
&=
(\mu_{n_1}(m_1)-\mu_{n_1}(m_2))[\gamma|_{[0,p]}].
\end{split}
\end{equation}
The loop $\gamma|_{[0,p]}$ represents an element of the fundamental group of the form 
\[
[\gamma|_{[0,p]}]=w_1 z_1+w_2 z_2+h,
\] for some $w_1,w_2\in\Z$ and $h\in H$. Equation~\eqref{e:z1_factor} implies that $w_1\neq0$ and $w_2=0$. This contradicts equation~\eqref{e:z2_factor}, which implies $w_1=0$ and $w_2\neq0$.
\end{proof}

\subsection{Fundamental groups of rank at least one}\label{s:multiplicity_results_2}

Theorem~\ref{t:rank_larger_than_one} holds under the weaker assumption that the fundamental group is infinite abelian but not cyclic. This is the statement of our main result, Theorem~\ref{t:main}, which we restate here for the reader's convenience.
\vspace{5pt}

\noindent\textbf{Theorem~\ref{t:main}}. \textsl{Let $(M,g)$ be a closed connected Riemannian manifold whose fundamental group is isomorphic to $\Z\oplus H$, for some non-trivial abelian group $H$. Every isometry of $(M,g)$ homotopic to the identity admits infinitely many invariant geodesics.}
\vspace{5pt}

The situation here is much harder than in the previous subsection: since the rank of the fundamental group may be one, in general we cannot expect to find infinitely many $I$-invariant geodesics that are minima of the energy function in some connected component of the space of $I$-invariant curves. Instead, as in the special case of closed geodesics \cite{Bangert_Hingston:Closed_geodesics_on_manifolds_with_infinite_Abelian_fundamental_group}, we need to consider minimax critical orbits of the energy. 

Let $z$ be a generator of the subgroup $\Z\subset\pi_1(M)$. For all $k\in\N$, we denote by $\Omega_{k}$ the connected component of the path space $\Lambda(M;I)$ corresponding to $k z$ under the bijection~\eqref{e:bijection_pi1_pi0}. In particular $\Omega_k$ and $\Omega_j$ are different connected components if $k\neq j$. Moreover, Lemma~\ref{l:stationary_curves} guarantees that, for all but finitely many $k$, $\Omega_k$ does not contain stationary curves, and therefore the critical points of $E|_{\Omega_k}$ are genuine $I$-invariant geodesics. We denote by $\orb(\alpha_k)$ a global minimum of the energy $E$ on the connected component $\Omega_k$. Since we are looking for infinitely many $I$-invariant geodesics, we  assume that all the critical orbits of the energy are isolated: in particular $\orb(\alpha_k)$ is a strict local minimum of $E$. 

If the family $\{\orb(\alpha_k)\,|\,k\in\N\}$ corresponds to infinitely many geometrically distinct $I$-invariant geodesics, we are done. Therefore, let us focus on the other case: there exists an infinite subset $\K\subseteq \N$, a function $\eta:\K\to\N$, and a periodic $I$-invariant geodesic $\alpha$ with minimal period $q\geq1$ such that
\[
\alpha_k=\alpha^{\eta(k)q+1},\qquad\forall k\in\K.
\]
Since the $\alpha_k$'s belong to pairwise distinct connected components of $\Lambda(M;I)$, the function $\eta$ must be injective. By applying the homotopy equivalence $\nu:\Lambda(M,I)\to\Lambda(M;\id)$ defined in~\eqref{e:inverse_homotopy_equivalence_of_path_spaces}, we obtain
\begin{align}\label{e:iteration_properties_of_alpha}
(k_1-k_2)z=[\nu(\alpha_{k_1})]-[\nu(\alpha_{k_2})]=(\eta(k_1)-\eta(k_2))[\alpha|_{[0,q]}],\qquad\forall k_1,k_2\in\K.
\end{align}
This implies that the homotopy class $[\alpha|_{[0,q]}]$ does not belong to the subgroup $H$ of the fundamental group $\pi_1(M)$. Up to replacing $\alpha$ by $\tau\cdot\alpha$ for some $\tau\in(0,q)$, we can assume that $\alpha(0)$ is not a point of self-intersection of $\alpha$, that is, we have $\alpha(0)=\alpha(t)$ if and only if $t$ is a multiple of the period $q$.

Fix a non-zero  $h\in H$ and, for all $k\in\K$, a corresponding $h_k\in\pi_1(\Lambda(M;I),\alpha_k)$ such that $\pi_1(\ev) h_k=h$, where $\pi_1(\ev)$ is the surjective homomorphism in~\eqref{e:epimorphism_pi1_pi1}. We consider the minimax value
\begin{align}\label{e:minimax}
c_k:=\min_{\big.\Theta\in h_k}\,\max_{\big.s\in\R/\Z} E(\Theta(s)).
\end{align}
Standard critical point theory (see e.g.~\cite{Klingenberg:Lectures_on_closed_geodesics, Chang:Infinite_dimensional_Morse_theory_and_multiple_solution_problems, Mazzucchelli:Critical_point_theory_for_Lagrangian_systems}) implies that $c_k$ is a critical value of the energy $E$. We claim that
\begin{align*}
c_k > E(\alpha_k),\qquad \forall k\in\K.
\end{align*}
Indeed, assume by contradiction that $c_k=E(\alpha_k)$. Since $\orb(\alpha_k)$ is a strict minimum of the energy $E$, there must be a $\Theta$ achieving the minimax~\eqref{e:minimax} and taking values inside the critical orbit $\orb(\alpha_k)$. Since $\alpha(0)$ is not a point of self-intersection of $\alpha$, the homotopy class $h=\pi_1(\ev)[\Theta]$ must be a multiple of $[\alpha|_{[0,q]}]$. This is a contradiction, since $[\alpha|_{[0,q]}]$ is not contained in the subgroup $H$.

For our purpose, it will be useful to single out a particularly simple $\Theta$ achieving the above minimax.

\begin{lem}\label{l:nice_loop}
There exists $\Theta:\R/\Z\to\Lambda(M;I)$ that achieves the minimax in~\eqref{e:minimax} and such that $\Theta(s)$ lies in the sublevel set $\{E<c_k\}$ except at finitely many $s\in(0,1)$ in which it goes through some  point in $E^{-1}(c_k)\cap\crit(E)$.
\end{lem}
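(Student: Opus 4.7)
The plan is to start from an arbitrary $\Theta_0\in h_k$ realising the minimax~\eqref{e:minimax}, apply a Palais-type deformation to confine the set $\{s:E(\Theta_0(s))=c_k\}$ to small neighborhoods of the critical orbits at level $c_k$, and then modify the loop locally near each such orbit so that only finitely many parameter values land in the critical set $K:=E^{-1}(c_k)\cap\crit(E)$. A preliminary observation is that $K$ is a finite union of critical orbits $\orb(\beta_1),\dots,\orb(\beta_N)$: we are assuming all critical orbits of $E$ to be isolated (otherwise we already have infinitely many $I$-invariant geodesics and are done), the Palais-Smale condition gives compactness of $K$, and since we have already reduced to the case of periodic $I$-invariant geodesics, the quotient $K/\R$ is a compact discrete set, hence finite.

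For the deformation step, I would take pairwise disjoint open neighborhoods $W_1,\dots,W_N$ of $\orb(\beta_1),\dots,\orb(\beta_N)$ and invoke the standard deformation lemma for the Palais-Smale functional $E$ to produce a continuous family $r_t:\Lambda(M;I)\to\Lambda(M;I)$ with $r_0=\mathrm{id}$, $E\circ r_t\leq E$, together with some $\delta>0$ such that $r_1$ maps $\{E\leq c_k+\delta\}\setminus\bigcup_jW_j$ into $\{E\leq c_k-\delta\}$. Setting $\Theta_1:=r_1\circ\Theta_0$ gives a new loop in $h_k$ still realising the minimax, with the property that $E(\Theta_1(s))>c_k-\delta$ only where $\Theta_1(s)\in\bigcup_jW_j$. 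A subsequent small $C^0$-perturbation of $\Theta_1$ ensures that each $\Theta_1^{-1}(W_j)$ has only finitely many connected components.

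The final step is to modify $\Theta_1$ on these finitely many arcs so that the resulting loop meets $K$ at finitely many parameter values only. Applying the generalized Morse Lemma near each $\orb(\beta_j)$, exactly as in the proof of Lemma~\ref{l:Bangert_for_I_invariant_geodesics}, decomposes a small neighborhood of the orbit as a product of a characteristic submanifold with the negative and positive eigenspaces of the Hessian of $E$. On each arc in $\Theta_1^{-1}(W_j)$, I would replace the path by one that first retracts the positive-eigenspace component to zero (which only lowers $E$), then pushes the negative-eigenspace component away from the origin except at isolated instants when the path crosses $\orb(\beta_j)$, collapsing by reparameterization any sub-interval that lands entirely inside $\orb(\beta_j)$. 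The main technical obstacle lies precisely in this last step: one must verify that the local modification can be carried out continuously, preserve both the free homotopy class $h_k$ and the sublevel bound $\max E\circ\Theta\leq c_k$, and be performed uniformly for orbits $\orb(\beta_j)$ of any Morse index and any nullity. Gluing these local modifications with the unchanged portion of $\Theta_1$ outside $\bigcup_jW_j$ then yields the required loop $\Theta$.
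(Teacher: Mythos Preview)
Your overall plan---deform an arbitrary minimax loop so that it meets the level $c_k$ only along critical orbits, then perform a local modification near each orbit---matches the paper's. The divergence, and the genuine gap, is in the local step. The generalized Morse Lemma splits a tubular neighborhood of the \emph{characteristic submanifold}, and the tangent direction to the critical orbit $\orb(\beta_j)$ lies in the null space of the Hessian, hence inside that characteristic submanifold. After you retract the positive component and push in the negative directions, nothing prevents the residual path from wandering back and forth \emph{along} the orbit, so that it still touches $K$ on an infinite (even uncountable) set of parameters. Your proposed fix, ``collapsing by reparameterization any sub-interval that lands entirely inside $\orb(\beta_j)$'', does not work: a reparameterization of the domain never shrinks the image, and genuinely contracting a sub-arc that winds around the circle $\orb(\beta_j)$ may change the class $h_k$, since the loop $s\mapsto s\cdot\beta_j$ around the orbit is sent by $\pi_1(\ev)$ to $[\beta_j|_{[0,p_j]}]$, which need not vanish in $\pi_1(M)$.

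The paper avoids the Morse Lemma here and instead exploits the $\R$-action on $\Lambda(M;I)$. After the anti-gradient flow, it partitions $[0,1]$ into finitely many short intervals; on each interval meeting $E^{-1}(c_k)$ the first and last touching points $\Theta_0(a_i)$, $\Theta_0(b_i)$ lie on the \emph{same} orbit (by the choice of mesh), and the restriction is replaced by the linear time-shift $s\mapsto \tfrac{(s-a_i)\tau_i}{b_i-a_i}\cdot\Theta_0(a_i)$, shown to be homotopic to the original via pointwise shortest-geodesic interpolation. A second homotopy $\Theta_2(s)=(-\tau(s))\cdot\Theta_1(s)$, with $\tau$ continuous and $\tau(0)=0$, then untwists this motion and leaves the loop \emph{constant} on each such interval; now collapsing is harmless. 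The ingredient you are missing is precisely this use of the ambient $\R$-action to control the orbit direction; without it, the Morse-theoretic splitting alone does not deliver the finiteness you claim.
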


\begin{proof}
We start by considering an arbitrary $\Theta_0$ achieving the minimax. By pushing down $\Theta_0$ with the anti-gradient flow of $E$, we can assume that $\Theta_0$ touches the level set $E^{-1}(c_k)$ only along critical orbits of $E$. Moreover, by homotopying $\Theta_0$ fixing the base point $\Theta_0(0)=\alpha_k$, we can always assume that all the curves $\Theta_0(s)$ are piecewise smooth (indeed, even broken geodesics). Let $\epsilon>0$ be a small enough real number. More specifically, we want $\epsilon$ to be smaller than half the injectivity radius of $(M,g)$, and such that for any pair of critical points $\gamma_1,\gamma_2\in\crit(E)\cap E^{-1}(c_k)$ belonging to distinct critical orbits there exists  $t\in\R$ (actually $t\in[0,1]$) such that $\dist(\gamma_1(t),\gamma_2(t))>\epsilon$. Fix $n\in\N$ large enough so that 
\[
\dist(\Theta_0(s')(t),\Theta_0(s'')(t))<\epsilon,
\qquad
\forall t\in\R,\ s',s''\in[0,1]\mbox{ with }|s'-s''|\leq \tfrac1n.
\]
For all $i\in\{0,...,n-1\}$ such that the path $\Theta_0|_{[i/n,(i+1)/n]}$ intersects the minimax level set, we denote by $a_i$ [resp.~$b_i$]  the minimal [resp.~the maximal]  $s\in\big[\tfrac in,\tfrac{i-1}n\big]$ such that $\Theta_0(s)\in\crit(E)\cap E^{-1}(c_k)$.  Notice that, by our choice of $n$ and $\epsilon$, the critical points $\Theta_0(a_i)$ and $\Theta_0(b_i)$ of the energy $E$ belong to the same critical orbit. For the other values of $i$ we may simply set $a_i=b_i:=\tfrac in$. Let $\tau_i$ be the real number with smallest absolute value such that $\Theta_0(b_i)=\tau_i\cdot\Theta_0(a_i)$, i.e.
\[\Theta_0(b_i)(t)=\Theta_0(a_i)(t+\tau_i),\qquad\forall t\in\R.\]
We define the continuous loop $\Theta_1:\R/\Z\to\Lambda(M;I)$ by
\begin{align*}
\Theta_1(s):=
\left\{
  \begin{array}{lll}
    \tfrac{(s-a_i)\tau_i}{b_i-a_i}\cdot\Theta_0(a_i) &  & \mbox{if }i\in\{0,...,n-1\},\ s\in(a_i,b_i), \vspace{5pt} \\ 
    \Theta_0(s) &  & \mbox{otherwise}. 
  \end{array}
\right.
\end{align*}
It is not hard to see that the paths $\Theta_0|_{[a_i,b_i]}$ and $\Theta_1|_{[a_i,b_i]}$ are homotopic with fixed endpoints (and thus $[\Theta_0]=[\Theta_1]=h_k$). A homotopy $H_t:[a_i,b_i]\to\Lambda(M;I)$ joining them is defined by setting $t\mapsto H_t(s)(t')$ to be the shortest geodesic parametrized with constant speed joining $\Theta_0(s)(t')$ and $\Theta_1(s)(t')$. Since
\[\dist(\Theta_0(s)(t'),\Theta_1(s)(t')) < 2\epsilon <\injrad(M,g),\]
such a geodesic is unique. Moreover, the isometry $I$ maps geodesics to geodesics, and therefore $I(H_t(s)(0))=H_t(s)(1)$. This shows that $H_t$ is a well defined homotopy of paths in $\Lambda(M;I)$.

The union $\bigcup_{i=1}^n \overline{(a_i,b_i)}$ can be written as the disjoint union of closed intervals $[c_i,d_i]\subset(0,1)$, for $i\leq r\leq n$. Let $\tau:[0,1]\to\R$ be a continuous function supported inside $(0,1)$ and such that $\Theta_1(s)=\tau(s)\cdot\Theta_1(c_i)$ for all $i\in\{1,...,r\}$, $s\in[c_i,d_i]$. We define the continuous loop $\Theta_2:\R/\Z\to\Lambda(M;I)$ by $\Theta_2(s):=(-\tau(s))\cdot\Theta_1(s)$. Clearly $[\Theta_1]=[\Theta_2]=h_k$. The loop $\Theta_2$ lies inside the sublevel set $\{E<c_k\}$ except along the intervals $[c_i,d_i]$, where it is constantly equal to some $\gamma_i\in\crit(E)\cap E^{-1}(c_k)$. By collapsing each of those intervals to a point, we obtain a loop $\Theta$ with the desired properties.
\end{proof}

For all $k\in\K$, we denote by $\Theta_k$ a loop satisfying the assumptions of Lemma~\ref{l:nice_loop}, and such that the finite set
\[
S_k:=\{ s\in\R/\Z\ |\ E(\Theta_k(s))=c_k \}
\]
has minimal cardinality. Theorem~\ref{t:main} is an immediate consequence of the following lemma.

\begin{lem}
The critical orbits of the points in $\bigcup_{k\in\K}\Theta_k(S_k)$ correspond to infinitely many geometrically distinct $I$-invariant geodesics
\end{lem}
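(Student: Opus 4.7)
The plan is to argue by contradiction: suppose that $\bigcup_{k\in\K}\Theta_k(S_k)$ meets only finitely many geometrically distinct $I$-invariant geodesics $\gamma^{(1)},\ldots,\gamma^{(N)}$, with minimal periods $p_1,\ldots,p_N\geq 1$. Then every element of this set is a time-shift of some iterate $(\gamma^{(j)})^{mp_j+1}$. Applying Lemma~\ref{l:Bangert_for_I_invariant_geodesics} to each $\gamma^{(j)}$ in turn and taking the maximum of the resulting thresholds, one obtains a single integer $M_0\in\N$ such that for every $j\in\{1,\ldots,N\}$, every $m\ge M_0$, and every path $\Xi:[-1,1]\to\Lambda(M;I)$ with $\Xi(0)=(\gamma^{(j)})^{mp_j+1}$ and $E(\Xi(s))<E((\gamma^{(j)})^{mp_j+1})$ for $s\ne 0$, there is a fixed-endpoints homotopy of $\Xi$ inside the sublevel set $\{E<E((\gamma^{(j)})^{mp_j+1})\}$.

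The minimax values satisfy $c_k\to\infty$ as $k\to\infty$ along $\K$: indeed $c_k>E(\alpha_k)=(\eta(k)q+1)^2\, g(\dot\alpha(0),\dot\alpha(0))$, and the injectivity of $\eta:\K\to\N$ forces $\eta(k)\to\infty$. On the other hand, for each fixed $j$ the energy $E((\gamma^{(j)})^{mp_j+1})$ grows quadratically in $m$, so only finitely many iterates of the $\gamma^{(j)}$ lie in any bounded range of energies. Consequently, there exists $k_0\in\K$ such that for every $k\in\K$ with $k\ge k_0$ and every $s\in S_k$, the critical point $\Theta_k(s)$ belongs to the critical orbit of some $(\gamma^{(j)})^{mp_j+1}$ with $m\ge M_0$.

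Fix such a $k$ and pick any $s_0\in S_k$; write $\Theta_k(s_0)=\tau_0\cdot(\gamma^{(j_0)})^{m_0p_{j_0}+1}$ with $m_0\ge M_0$. Since $S_k$ is finite and $\Theta_k$ lies in $\{E<c_k\}$ off $S_k$, choose $\epsilon>0$ small enough that $[s_0-\epsilon,s_0+\epsilon]\cap S_k=\{s_0\}$. The $\R$-action by time-shift on $\Lambda(M;I)$ preserves $E$; translating the sub-path $s\mapsto\Theta_k(s_0+\epsilon s)$ by $-\tau_0$ yields a path $\Xi:[-1,1]\to\Lambda(M;I)$ fulfilling the hypotheses of Lemma~\ref{l:Bangert_for_I_invariant_geodesics} at $\gamma^{(j_0)}$ and $m=m_0$. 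The lemma provides a fixed-endpoints homotopy of $\Xi$ into $\{E<c_k\}$, which we translate back by $\tau_0$ to obtain a fixed-endpoints homotopy of $\Theta_k|_{[s_0-\epsilon,s_0+\epsilon]}$ into the sublevel set $\{E<c_k\}$.

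Gluing the new sub-path to the unchanged remainder of $\Theta_k$ produces a continuous loop $\Theta_k':\R/\Z\to\Lambda(M;I)$ freely homotopic to $\Theta_k$ (hence representing $h_k$), with $\{s:E(\Theta_k'(s))=c_k\}=S_k\setminus\{s_0\}$. By the minimax characterization of $c_k$ one has $\max_s E(\Theta_k'(s))\ge c_k$, so in fact $\max_s E(\Theta_k'(s))=c_k$, and the remaining points of equality are unchanged critical points of $E$ at level $c_k$. Thus $\Theta_k'$ satisfies the conclusion of Lemma~\ref{l:nice_loop} with critical-level cardinality $|S_k|-1<|S_k|$, contradicting the minimality of $|S_k|$. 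The crux of the argument is the synergy between Lemma~\ref{l:Bangert_for_I_invariant_geodesics}, which is capable of eliminating a critical-level visit only at sufficiently high iterates, and the divergence $c_k\to\infty$ which, under the finiteness assumption, ensures that every visit in $\Theta_k(S_k)$ is eventually by such a high iterate.
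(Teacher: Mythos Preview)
Your argument is correct and follows the paper's strategy: assume finiteness, invoke Lemma~\ref{l:Bangert_for_I_invariant_geodesics} to push a sub-arc of $\Theta_k$ below level $c_k$, and contradict the minimality of $|S_k|$; your use of $c_k>E(\alpha_k)\to\infty$ to force high iterates is a minor (and equally valid) variant of the paper's pigeonhole-plus-injectivity argument. One wording fix: the homotopy from $\Theta_k$ to $\Theta_k'$ is \emph{based} (it is supported away from $s=0$), not merely free, and it is precisely this that ensures $[\Theta_k']=h_k\in\pi_1(\Lambda(M;I),\alpha_k)$.
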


\begin{proof}
We will prove the statement by contradiction: we assume that, up to replacing the set $\K$ by an infinite subset of it, there exists a periodic $I$-invariant geodesic $\gamma\in\Lambda(M;I)$ of minimal period $p\geq 1$, a function $\mu:\K\to\N$, and points $s_k\in S_k$ and $t_k\in\R$ such that
$t_k\cdot\Theta_k(s_k)=\gamma^{\mu(k)p+1}$ for all $k\in\K$. In order to simplify the notation, we can  assume that all the $t_k$'s are zero. Indeed, if this is not true, we consider a continuous function $\tau_k:\R/\Z\to\R$ such that $\tau_k(0)=0$ and $\tau_k(s_k)=t_k$, and we replace $\Theta_k$ by the path $s\mapsto\tau_k(s)\cdot\Theta_k(s)$. Therefore
\[
\Theta_k(s_k)=\gamma^{\mu(k)p+1},\qquad\forall k\in\K.
\]
By the same reasoning that we did in equation~\eqref{e:iteration_properties_of_alpha} for $\alpha$, we infer that the function $\mu$ is injective, and in particular $\mu(k)\to\infty$ as $k\to\infty$. Fix a small $\epsilon_k>0$, so that $s_k$ is the only point in the intersection $[s_k-\epsilon_k,s_k+\epsilon_k]\cap S_k$. By Lemma~\ref{l:Bangert_for_I_invariant_geodesics}, if $k$ is sufficiently large, the restricted path $\Theta_k|_{[s_k-\epsilon_k,s_k+\epsilon_k]}$ can be pushed, fixing its endpoints, inside the sublevel set $\{E<c_k\}$. This contradicts the minimality of the set $S_k$.
\end{proof}

\begin{rem}
\label{r:infinite_cyclic_fundamental_group}
As we wrote in the introduction, it is plausible that Theorem~\ref{t:main} still hold if the fundamental group of $M$ is isomorphic to $\Z$. Indeed, in that case, by generalizing an argument due to Bangert and Hingston \cite[Lemma~2]{Bangert_Hingston:Closed_geodesics_on_manifolds_with_infinite_Abelian_fundamental_group}, one would obtain non-trivial homotopy classes $h_k\in\pi_n(\Lambda(M;I),\alpha_k)$, for some $n\geq2$, such that $\pi_n(\ev)h_k=0$. These classes can be employed to setup an $n$-dimensional minimax scheme, thus obtaining a sequence of critical orbits $\orb(\gamma_k)$, where $\gamma_k$ is freely homotopic to $\alpha_k$. In order for our arguments to go through in this new situation, the only missing ingredient is an $n$-dimensional version of Lemma~\ref{l:nice_loop}, that is, for maps $\Theta:S^n\to\Lambda(M;I)$. Unfortunately, manipulating representatives of higher-dimensional homotopy classes seems to be a hard task, and significant new ideas may be needed.
\hfill\qed
\end{rem}

\bibliography{_biblio}
\bibliographystyle{amsalpha}

\end{document}